\documentclass{amsart}
\usepackage[ocgcolorlinks,linktoc=all]{hyperref}
\hypersetup{citecolor=blue,linkcolor=red}
\newtheorem{theorem}{Theorem}[section]
\newtheorem{lemma}[theorem]{Lemma}
\newtheorem{proposition}[theorem]{Proposition}
\newtheorem{corollary}[theorem]{Corollary}
\theoremstyle{definition}

\theoremstyle{remark}
\newtheorem{remark}[theorem]{Remark}

\numberwithin{equation}{section}



\begin{document}

\title[On the Gauss curvature flow]{An application of dual convex bodies to the inverse Gauss curvature flow}
\author[M. N. Ivaki]{Mohammad N. Ivaki}
\address{Department of Mathematics and Statistics,
  Concordia University, Montreal, QC, Canada, H3G 1M8}
\curraddr{}
\email{mivaki@mathstat.concordia.ca}
\thanks{}

\subjclass[2010]{Primary 53C44, 52A05; Secondary 35K55}

\dedicatory{}

\begin{abstract}
By means of dual convex bodies, we obtain regularity of solutions to the expanding Gauss curvature flows with homogeneity degrees $-p$, $0<p<1$. At the end,
we remark that our method can also be used to obtain regularity of solutions to the shrinking Gauss curvature flows with homogeneity degrees less than one.
\end{abstract}

\maketitle
\section{Introduction}
The setting of this paper is the $n$-dimensional Euclidean space, $\mathbb{R}^n.$ A compact convex subset of $\mathbb{R}^n$ with non-empty interior
is called a \emph{convex body}.

Let $K$ be a convex body. The support function of $K$, denoted by $s_K$, is defined as
\begin{align*}
s_K&:\mathbb{S}^{n-1}\to\mathbb{R}\\
 s_{K}(z)&=\max_{y\in \partial K}\langle z,y\rangle,
\end{align*}
where $\langle z,y\rangle$ denotes the standard inner product of $z$ and $y$.

Let $K$ be a strictly convex body, having the origin in its interior such that
its boundary, denoted by $\partial K$, is smoothly embedded in $\mathbb{R}^{n}$ by
\[x_K:\partial K\to\mathbb{R}^{n}.\]
Let $\mathbf{n}_K(x)$ be the outward unit normal vector of $K$ for every $x\in \partial K$. The support function of $K$ has the following
simple form
\[s_{K}(z):= \langle \mathbf{n}_K^{-1}(z), z \rangle,\]
for each $z\in\mathbb{S}^{n-1}$, where $\mathbf{n}_K^{-1}:\mathbb{S}^{n-1}\to \partial K$ is the inverse of the Gauss map $\mathbf{n}_K$. We denote
the standard metric on $\mathbb{S}^{n-1}$ by $\bar{g}_{ij}$ and the standard Levi-Civita connection of $\mathbb{S}^{n-1}$ by $\bar{\nabla}$.
We denote the Gauss curvature of $\partial K$ by $\mathcal{K}$ and remark that, as a function on $\partial K$, it is related to the support function
of the convex body by
\[S_{n-1}:=\frac{1}{\mathcal{K}\circ\mathbf{n}_K^{-1}}:=\det_{\bar{g}}(\bar{\nabla}_i\bar{\nabla}_js+\bar{g}_{ij}s).\]
We denote the matrix of the radii of curvature of $\partial K$ by $\mathfrak{r}=[\mathfrak{r}_{ij}]_{1\leq i,j\leq n-1}$, the entries of $\mathfrak{r}$ are considered as functions on the unit sphere. They are related to the support function by the identity
\[\mathfrak{r}_{ij}:=\bar{\nabla}_i\bar{\nabla}_j s+s\bar{g}_{ij}.\]
We denote the eigenvalues of $[\mathfrak{r}_{ij}]_{1\leq i,j\leq n-1}$ with respect to the metric $\bar{g}_{ij}$ by $\lambda_i$ for $1\leq i\leq n-1,$
and we assume that $\lambda_1\leq\lambda_2\leq\cdots\leq \lambda_{n-1}.$

In this paper, we present a new method to study the long-time behavior of flow by certain powers of the Gauss curvature by linking expanding Gauss curvature flows to shrinking Gauss curvature flows, see section 6 for the latter.

For a given smooth, strictly convex embedding $x_K$, we consider a family of smooth, strictly convex bodies $\{K_t\}_t$, given by the smooth embeddings $x:\partial K\times[0,T)\to \mathbb{R}^{n}$, which
are evolving according to the anisotropic expanding Gauss curvature flow
 \begin{equation}\label{e: flow0}
 \partial_{t}x(\cdot,t):=\frac{\Phi(\mathbf{n}_{K_t}(\cdot))}{\mathcal{K}^{\frac{p}{n-1}}(\cdot,t)}\, \mathbf{n}_{K_t}(\cdot),~~
 x(\cdot,0)=x_{K}(\cdot),
 \end{equation}
where $\Phi:\mathbb{S}^{n-1}\to\mathbb{R}$ is a positive smooth function and $p\in (0,\infty).$
In this equation for each time $t$ we have $x(\partial K,t)=\partial K_t.$
Throughout this paper we assume that $K_0$, the initial smooth convex body, encloses the origin of $\mathbb{R}^n.$

The flow (\ref{e: flow0}) has been studied by Andrews \cite{BA} in dimension two and it was proved that
properly rescaled flows will evolve solutions to the unit circle if $\Phi\equiv1$. For $\Phi\equiv 1$, the inverse Gauss curvature flow has been investigated by Chow and Tsai \cite{BT,BT2}, by Schn\"{u}rer \cite{Shu} for $p=2$ in $\mathbb{R}^3$, and by Q. R. Li \cite{QLI} for $1<p\leq 2$ in $\mathbb{R}^3$. When $\Phi\equiv 1$, the flow (\ref{e: flow0}) is an interesting case of expanding flows $\partial_t x=F^{-\frac{p}{n-1}}\textbf{n},$ where $F$ is a positive, strictly monotone, concave function of principal radii of curvature with the homogeneity degree $n-1$, and $F$ is zero on the boundary of the positive cone $\Gamma=\{(\alpha_1,\cdots,\alpha_{n-1}):\alpha_i>0\}.$ For $p=1$, expanding flows have first been studied by Gerhardt and Urbas \cite{G,JU1,JU}. In particular, they proved that starting the flow with a smooth, strictly convex hypersurface the solution remains smooth and strictly convex. Moreover, the solution exists on $(0,\infty)$ and becomes spherical as it expands. In addition, in \cite{JU} this result was extended to cover the case $0<p<1$. A special class of expanding flows, the inverse mean curvature flow, has been employed by Huisken and Ilmanen to prove the Riemannian Penrose inequality \cite{HI}. Recently, in \cite{G}, Gerhardt thoroughly studied flow of closed star-shaped hypersurfaces and strictly convex hypersurfaces under the expanding flows $\partial_t x=F^{-\frac{p}{n-1}}\textbf{n}$. He demonstrated after a proper rescaling, the rescaled flow will evolve a closed star-shaped hypersurface if $0<p<1$, and a strictly convex hypersurface if $p>1$, to a sphere.

To study the flow (\ref{e: flow0}), one can study the evolution equation of the support functions of $\{K_t\}_t$:
\begin{align}\label{e: flow1}
&s:\mathbb{S}^{n-1}\times[0,T)\to \mathbb{R}\nonumber\\
 &\partial_{t}s(z,t):=\Phi(z) S_{n-1}^{\frac{p}{n-1}}(z,t)=:\Phi F([\mathfrak{r}_{ij}]_{1\leq i,j\leq n-1}),\\
 &s(z,0)=s_{K_0}(z),~~ s(z,t)=s_{K_t}(z)\nonumber.
\end{align}
The short time existence and the uniqueness of solutions to (\ref{e: flow1}) with an initial smooth and strictly convex body follow from the strict parabolicity of the equation, see property \textbf{2} below. Moreover, it is proved in \cite{BT} that if $\Phi\equiv 1$, then the convexity is preserved. Our main theorem, which is stated at the end of this section, bears the restriction $\Phi\equiv 1$, but some of the calculations are for an arbitrary $\Phi.$

We list a few properties of the function $G(\lambda_{1},\lambda_2,\cdots,\lambda_{n-1}):=(\prod_{i=1}^{n-1}\lambda_i)^{\frac{p}{n-1}}$
defined on the positive cone $\Gamma=\{(\alpha_1,\cdots,\alpha_{n-1}):\alpha_i>0\}.$
 \begin{description}
   \item[1] $G$ is a concave positive symmetric real-valued function on $\Gamma,$
   \item[2] $\frac{\partial G}{\partial \lambda_i}(\lambda_{1},\cdots,\lambda_{n-1})>0$, for all $1\leq i\leq n-1,$
   \item[3] $G=0$ on the $\partial \Gamma,$
   \item[4] $G(\lambda_{1},\cdots,\lambda_{n-1})=\frac{1}{G\left(\frac{1}{\lambda_1},\cdots,\frac{1}{\lambda_{n-1}}\right)}$,
   \item[5] $\sum_{i=1}^{n-1}\frac{\partial G}{\partial \lambda_i}\lambda_i=pG.$
 \end{description}
We will use these properties in the remainder of the present text without further mention.

Let $\mathbb{B}_R$ denote the origin-centered ball of $\mathbb{R}^n$ with radius $R>0$. For simplicity, $\mathbb{B}_1$ is denoted by $\mathbb{B}.$ The following is the main result of the paper.
\begin{theorem}[Main Theorem]\label{thm: expanding}
Assume that $n>2$, $\Phi\equiv1$, and $0<p<1$. Let $x_{K}$ be a smooth, strictly convex embedding of $\partial K$. Then there exists a unique solution of (\ref{e: flow0}). The solution remains smooth and strictly convex and the rescaled convex bodies given by $\left(\frac{V(\mathbb{B})}{V(K_t)}\right)^{\frac{1}{n}}K_t$ converge in the $C^{\infty}$ topology to $\mathbb{B}$.
 \end{theorem}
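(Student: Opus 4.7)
\emph{Proof proposal.} My plan is to exploit polar duality: convert the expanding flow for $K_t$ into a contracting Gauss-curvature-type flow for the dual bodies $K_t^{*}$, establish regularity and asymptotic results for the contracting problem, and then transfer the conclusions back.

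First, short-time existence and uniqueness follow from the strict parabolicity of \eqref{e: flow1} (properties \textbf{1}--\textbf{2}), and preservation of strict convexity for $\Phi\equiv 1$ is cited from \cite{BT}, yielding a smooth, strictly convex solution on a maximal interval $[0,T_{\max})$. Comparing with evolving concentric balls in \eqref{e: flow1} via the maximum principle gives uniform two-sided bounds on $s$, hence on the in- and circum-radii of $K_t$ and of $K_t^{*}$.

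Next, for each $t$ let $K_t^{*}$ denote the polar dual. Using $s_{K_t^{*}}(u)=1/\rho_{K_t}(u)$ and the interchange of the radial and Gauss parametrizations under polar duality, a direct computation shows that $s^{*}:=s_{K_t^{*}}$ satisfies a flow of the form
\[
 \partial_t s^{*}=-\,\omega(u,t)\,\bigl(S_{n-1}^{*}\bigr)^{-p/(n-1)},
\]
i.e.\ an anisotropic contracting Gauss curvature flow. In the Hessian of $s^{*}$ the speed is $-\det(\mathfrak r^{*})^{-p/(n-1)}$ up to the factor $\omega>0$; this is concave in $\mathfrak r^{*}$ (in agreement with property \textbf{1} applied to $G(\mu)=(\prod\mu_i)^{p/(n-1)}$), while $\omega$ depends on $s^{*}$ and its spherical gradient only through the angle between the radial and the outward-normal directions on $\partial K_t^{*}$, which stays uniformly bounded away from $\pi/2$ thanks to the $C^0$ estimate above. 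Hence the dual flow is uniformly parabolic and concave.

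With this setup in hand I would run the standard contracting Gauss curvature flow machinery on $s^{*}$: Tso's test-function argument for a lower bound on $S_{n-1}^{*}$, and an Andrews-type maximum-principle computation on $\log\mu_{\max}^{*}$ for an upper bound. The classical identity relating the principal radii of $K_t$ and $K_t^{*}$ at corresponding points (which, in view of property \textbf{4}, essentially inverts the eigenvalues up to the bounded factors above) translates these into two-sided bounds on the $\lambda_i$ of $K_t$, giving uniform parabolicity of \eqref{e: flow1}. Evans--Krylov and Schauder then yield uniform $C^{k}$ estimates for every $k$, so $T_{\max}=\infty$. For the long-time behaviour, the rescaled dual converges smoothly to $\mathbb B$ by the contracting-flow theory for homogeneity less than one recalled in Section~6, and dualizing back gives the claimed $C^{\infty}$ convergence of $(V(\mathbb B)/V(K_t))^{1/n}K_t$ to $\mathbb B$.

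The main obstacle is the correct derivation and exploitation of the dual flow: tracking the two parametrizations induced by the Gauss maps of $K_t$ and $K_t^{*}$, verifying that the anisotropic factor $\omega$ stays in a compact subinterval of $(0,\infty)$, and transferring $C^2$ bounds cleanly back and forth through the nonlinear duality. An additional subtlety is that the dual flow shrinks to a point in finite time while the original expands forever, so the time reparametrization $t\mapsto\tau$ implicit in the correspondence must be handled explicitly in every comparison step.
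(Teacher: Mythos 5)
Your overall instinct---polarity turns the expanding flow into a contracting Gauss-curvature-type flow for $K_t^{\circ}$---is exactly the paper's motivation, but the plan of running the \emph{entire} contracting-flow machinery on the dual and transferring everything back has genuine gaps. First, the dual evolution \eqref{e: ev of dual} is not an anisotropic contracting Gauss curvature flow in the standard sense: the prefactor is not a fixed function of the normal $z$ but depends on the solution through $s^{\circ}$ and $\bar{\nabla}s^{\circ}$, so Andrews-type convergence results (Theorem \ref{thm:2}) do not apply off the shelf; moreover they would only cover $p\in\left(\frac{n-1}{n+1},1\right)$ unconditionally, not all of $(0,1)$, and would give convergence to a homothetic solution rather than to a ball. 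The paper extracts only \emph{one} estimate from the dual flow---a Tso-type upper bound on the dual speed, hence on $\mathcal{K}^{\circ}$, valid once two-sided bounds on the support function are assumed---and converts it into a lower bound on $\mathcal{K}$ via Kaltenbach's identity $\left(\frac{\mathcal{K}}{s^{n+1}}\right)(x)\left(\frac{\mathcal{K}^{\circ}}{s^{\circ\, n+1}}\right)(x^{\circ})=1$. That identity is special to the Gauss curvature: individual principal radii do \emph{not} transfer cleanly under polarity, so your step ``transferring $C^2$ bounds back and forth through the nonlinear duality'' would fail as stated. The paper instead gets the principal curvature bounds directly on the primal flow from the evolution equation of $\mathfrak{r}^{11}$ together with the two-sided pinching of $\mathcal{K}$.

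Second, several supporting claims need repair. An expanding solution has $s\to\infty$, so ``uniform two-sided bounds on $s$'' from ball comparison cannot hold without normalization; what is actually needed (and what controls your factor $\omega$ and the angle between radial and normal directions) is a uniform bound on $\max s/\min s$ for the rescaled bodies, which the paper obtains from the Aleksandrov-reflection gradient estimate of Chow--Gulliver, $\max s-\min s\le C\pi$. That same estimate is also what forces the limit to be the \emph{round} ball: since $\min s\to\infty$ while the oscillation stays bounded, the normalized bodies converge to $\mathbb{B}$ in $C^{0}$, and the uniform $C^{k}$ bounds (obtained via Krylov after a rescaling-in-time argument, since the curvature estimates degenerate as $t\to 0$ and improve as $t\to\infty$) upgrade this to $C^{\infty}$. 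Your proposal has no mechanism for roundness over the full range $0<p<1$. Finally, there is no time reparametrization in the duality---$K_t^{\circ}$ is the polar of $K_t$ at the same time $t$, and the dual bodies shrink to a point only as $t\to\infty$---so the ``finite-time singularity of the dual'' you flag as an obstacle does not actually arise.
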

We remark that the $C^1$ convergence and the $C^\infty$ convergence to the unit ball were proved in \cite{BT} and \cite{JU}, respectively. Our main contribution is to give a new proof based by employing dual convex bodies. In the rest of this paper, we will focus on the case $0<p<1.$
\section{evolution equation of the dual convex bodies}
In this section, we calculate the evolution equation of dual convex bodies, which as we will see later on, it can be employed as a useful tool in obtaining regularity of solutions. To my knowledge, evolution equation of dual convex bodies was first introduced by Stancu in \cite{S} in the context of centro-affine normal flows. The method used in \cite{S} for obtaining the evolution equation of dual convex bodies relied on a certain duality property of $L_p$-affine surface areas, therefore that method cannot be used here. In this section, we give an argument that is applicable to flow by powers of the Gauss curvature. It is worth noting that the idea of `duality'(different from the one we consider here) has been used in \cite{BA6,BP}. In \cite{BP}, an upper curvature bound is obtained for the curve shortening flow by studying the isoperimetric profile and to obtain a uniform lower curvature bound, the isoperimetric profile of the `dual' (in this case, the complement of the region enclosed by a simple closed curve) is required. Similarly in \cite{BA6}, an upper curvature bound is obtained from interior non-collapsing and a lower curvature bound is obtained from exterior non-collapsing which is not directly interior non-collapsing of the `dual' (complement) but it is closely related.

Let $K$ be a convex body having the origin in its interior. The dual convex body of $K$ with respect to the origin, denoted by $K^{\circ}$,
is defined as
\[ K^{\circ} = \{ y \in \mathbb{R}^{n} \mid x \cdot y \leq 1,\ \forall x
\in K \}.\]
If $K$ is smooth, then $K^{\circ}$ is also smooth, see \cite{H}.
We furnish all the geometric quantities associated with $K^{\circ}$ by a superscript $~^\circ.$
Next, we find the evolution equation of the support function of $K_t^{\circ}$
as the support function of $K_t$ evolves by (\ref{e: flow1}). To this aim, we parameterize $\partial K_t$ over the unit sphere
\[x_{K_t}=r(z(\cdot,t),t)z(\cdot,t):\mathbb{S}^{n-1}\to\mathbb{R}^n,\]
where $r(z(\cdot,t),t)$ is the radial function of $K_t$ in direction $z(\cdot,t).$ We then recall that how the geometric quantities that we need can be expressed in this parametrization.

Let $K$ be a smooth convex body whose boundary is parameterized over the unit sphere with the radial function $r.$
The metric $[g_{ij}]_{1\leq i,j\leq n-1}$, unit normal $\mathbf{n}$, support function $s$, and the second fundamental form $h_{ij}$ of $\partial K$
can be be written in terms  of $r$ and whose spatial derivatives as follows:

\begin{enumerate}
  \item $\displaystyle g_{ij}=r^2\bar{g}_{ij}+\bar{\nabla}_ir\bar{\nabla}_jr,$
  \item $\displaystyle \mathbf{n}=\frac{1}{\sqrt{r^2+|\bar{\nabla}r|^2}}\left(rz-\bar{\nabla}r\right),$
  \item $\displaystyle s=\langle x,\mathbf{n}\rangle=\frac{r^2}{\sqrt{r^2+|\bar{\nabla}r|^2}},$
  \item $\displaystyle h_{ij}=\frac{1}{\sqrt{r^2+|\bar{\nabla}r|^2}}\left(-r\bar{\nabla}_i\bar{\nabla}_jr+2\bar{\nabla}_ir\bar{\nabla}_jr+
  r^2\bar{g}_{ij}\right).$
\end{enumerate}
A good reference for these identities is \cite{Zhu}.
\begin{lemma}\label{lem: ev r}
As $K_t$ evolve according to the flow (\ref{e: flow1}), then its radial function evolves as follows
\begin{align*}
\partial_t r=\frac{\sqrt{r^2+|\bar{\nabla}r|^2}}{r}\frac{\Phi(\mathbf{n}_{K_t})}{\mathcal{K}^{\frac{p}{n-1}}}.
\end{align*}
\end{lemma}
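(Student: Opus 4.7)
The plan is to exploit the fact that the normal velocity of an evolving hypersurface is a geometric quantity independent of parametrization: the radial parametrization of $\partial K_t$ must therefore reproduce the same normal speed that appears in the flow equation.

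First I would reparametrize $\partial K_t$ over $\mathbb{S}^{n-1}$ by $y(z,t) := r(z,t)\,z$ and relate it to the parametrization $x(\cdot,t)$ that evolves under (\ref{e: flow0}). Since both describe the same smooth, strictly convex hypersurface, one has $y(z,t) = x(p(z,t),t)$ for some smooth $p(z,t) \in \partial K$ (existence and smoothness of $p$ follow from strict convexity and the implicit function theorem applied to $p \mapsto x(p,t)/|x(p,t)|$). Differentiating in $t$,
\begin{equation*}
\partial_t y \;=\; \frac{\Phi(\mathbf{n}_{K_t})}{\mathcal{K}^{\frac{p}{n-1}}}\,\mathbf{n}_{K_t} \;+\; (\text{vector tangent to }\partial K_t).
\end{equation*}
Taking the inner product with $\mathbf{n}_{K_t}$ kills the tangential term and leaves the prescribed normal speed on the right.

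Next I would evaluate the left-hand side using that $z \in \mathbb{S}^{n-1}$ is held fixed in $t$, so $\partial_t y = (\partial_t r)\,z$. Thus
\begin{equation*}
(\partial_t r)\,\langle z, \mathbf{n}_{K_t}\rangle \;=\; \frac{\Phi(\mathbf{n}_{K_t})}{\mathcal{K}^{\frac{p}{n-1}}}.
\end{equation*}
Finally I would invoke identity (2) of the excerpt: because $\bar\nabla r$ is tangent to $\mathbb{S}^{n-1}$ at $z$, we have $\langle z,\bar\nabla r\rangle = 0$, hence
\begin{equation*}
\langle z,\mathbf{n}_{K_t}\rangle \;=\; \frac{1}{\sqrt{r^2+|\bar\nabla r|^2}}\,\bigl\langle z,\, r z - \bar\nabla r\bigr\rangle \;=\; \frac{r}{\sqrt{r^2+|\bar\nabla r|^2}}.
\end{equation*}
Solving for $\partial_t r$ produces the claimed expression.

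There is no genuine obstacle here; the computation is standard for radial parametrizations of normal flows. The only point requiring a small amount of care is the justification that the radial parametrization is well-defined and smooth for all $t \in [0,T)$, which is immediate from the hypothesis that each $K_t$ is smooth, strictly convex, and contains the origin. The rest is just reading off $\langle z,\mathbf{n}_{K_t}\rangle$ from formula (2) and dividing.
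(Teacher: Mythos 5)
Your proposal is correct and proves the lemma. It is essentially the same computation as the paper's, but with a different decomposition that is worth noting. The paper keeps the Lagrangian parametrization $x=r(z(\cdot,t),t)\,z(\cdot,t)$, expands $\partial_t x$ including the term $r\,\partial_t z$ coming from the motion of the radial direction, writes the prescribed velocity $\frac{\Phi}{\mathcal{K}^{p/(n-1)}}\mathbf{n}$ in its $z$- and $\bar\nabla r$-components via identity (2), and then solves the resulting pair of equations: the tangential one determines $\partial_t z$, which is substituted back into the radial one to isolate $\partial_t r$. You instead fix $z$, observe that the normal speed is parametrization-independent, and project the velocity $(\partial_t r)\,z$ of the radial graph onto $\mathbf{n}$, using $\langle z,\mathbf{n}\rangle = r/\sqrt{r^2+|\bar\nabla r|^2}$ (equivalently $s=r\langle z,\mathbf{n}\rangle$, identity (3)). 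This bypasses the tangential equation and the computation of $\partial_t z$ entirely --- information the paper derives but never uses --- at the cost of the short reparametrization argument $y(z,t)=x(p(z,t),t)$, which you justify adequately via strict convexity and the fact that the origin stays interior. Both routes rest on formula (2) for $\mathbf{n}$ and both produce the Eulerian time derivative of $r$ at fixed $z$, which is the quantity actually needed later when computing $\partial_t s^{\circ}=\partial_t(1/r)$. No gap.
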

\begin{proof}
\begin{align*}
\partial_t x&=\partial _t \left(r(z(\cdot,t),t)z(\cdot,t)\right)\\
&=(\partial _t r)z+\langle\bar{\nabla}r,\partial_t z\rangle z+r\partial _t z\\
&=\frac{\Phi(\mathbf{n}_{K_t}(\cdot))}{\mathcal{K}^{\frac{p}{n-1}}}\mathbf{n}_{K_t}\\
&=\frac{\Phi}{\mathcal{K}^{\frac{p}{n-1}}}\frac{1}{\sqrt{r^2+|\bar{\nabla}r|^2}}\left(rz-\bar{\nabla}r\right),
\end{align*}
where from the third line to fourth line we used (2).
By comparing terms on the second line with those on the fourth line, we get
\begin{equation}\label{e: tan com}
r\partial_t z=-\frac{\Phi}{\mathcal{K}^{\frac{p}{n-1}}}\frac{\bar{\nabla}r}{\sqrt{r^2+|\bar{\nabla}r|^2}}
=-\frac{\Phi}{\mathcal{K}^{\frac{p}{n-1}}}\frac{\bar{\nabla}r}{\sqrt{r^2+|\bar{\nabla}r|^2}},
\end{equation}
and
\begin{equation}\label{e: nor com}
\partial _t r+\langle\bar{\nabla}r,\partial_t z\rangle =\frac{\Phi}{\mathcal{K}^{\frac{p}{n-1}}}\frac{r}{\sqrt{r^2+|\bar{\nabla}r|^2}}.
\end{equation}
Replacing $\partial_t z$ in (\ref{e: nor com}) by its equivalent expression from equation (\ref{e: tan com}) completes the proof.
\end{proof}
As $\displaystyle\frac{1}{r}$ is the support function of $\partial K^{\circ}$ (see, \cite{schneider}), we can find the entries of the matrix $[\mathfrak{r}^{\circ}_{ij}]_{1\leq i,j\leq n-1}$ of $\partial K^{\circ}$:
\[\mathfrak{r}^{\circ}_{ij}=\bar{\nabla}_i\bar{\nabla}_j\frac{1}{r}+\frac{1}{r}\bar{g}_{ij}=
\frac{1}{r^3}\left(-r\bar{\nabla}^2_{ij}r+2\bar{\nabla}_ir\bar{\nabla}_jr+r^2\bar{g}_{ij}\right).\]
Thus, we have
\begin{equation}\label{e: h and h dual}
\mathfrak{r}^{\circ}_{ij}=\frac{\sqrt{r^2+|\bar{\nabla}r|^2}}{r^3}h_{ij}.
\end{equation}
We will use this identity in the proof of the next theorem.
\begin{theorem}[The dual evolution equation]
As $\{K_t\}_t$ evolves according to evolution equation (\ref{e: flow1}), then $\{K^{\circ}_t\}_t$ evolves according to
\begin{align}\label{e: ev of dual}
\partial_t s^{\circ}(z,t)=-\Phi\left(\frac{s^{\circ}z+\bar{\nabla}s^{\circ}}{\sqrt{s^{\circ~2}+|\bar{\nabla}s^{\circ}|^2}}\right)
\left(\frac{\left(s^{\circ~2}+ |\bar{\nabla}s^{\circ}|^2\right)^{\frac{(n+1)p}{2(n-1)}+\frac{1}{2}}}{s^{\circ ~ \frac{(n+1)p}{n-1}-1}}\right)\left(S_{n-1}^{\circ}\right)^{-\frac{p}{n-1}}.
\end{align}
\end{theorem}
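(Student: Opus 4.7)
My plan is to differentiate the identity $s^{\circ}=1/r$ and then rewrite every factor that appears on the right-hand side of Lemma~\ref{lem: ev r} in terms of quantities intrinsic to $K_t^{\circ}$. Concretely, $\partial_t s^{\circ}=-r^{-2}\partial_t r$, and Lemma~\ref{lem: ev r} gives
\[
\partial_t s^{\circ} \;=\; -\frac{\sqrt{r^2+|\bar\nabla r|^2}}{r^3}\,\Phi(\mathbf{n}_{K_t})\,\mathcal{K}^{-\frac{p}{n-1}}.
\]
So the whole proof reduces to expressing the three factors $\mathbf{n}_{K_t}$, $\sqrt{r^2+|\bar\nabla r|^2}/r^3$, and $\mathcal{K}^{-p/(n-1)}$ in terms of $s^{\circ}$, $\bar\nabla s^{\circ}$, and $S_{n-1}^{\circ}$.

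The substitutions $r=1/s^{\circ}$ and $\bar\nabla r=-\bar\nabla s^{\circ}/s^{\circ\,2}$ immediately yield $r^2+|\bar\nabla r|^2=(s^{\circ\,2}+|\bar\nabla s^{\circ}|^2)/s^{\circ\,4}$. Plugging these into formula~(2) from the excerpt gives
\[
\mathbf{n}_{K_t} \;=\; \frac{s^{\circ}z+\bar\nabla s^{\circ}}{\sqrt{s^{\circ\,2}+|\bar\nabla s^{\circ}|^2}},
\]
which is exactly the argument of $\Phi$ in the claim. So the first factor is handled directly.

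The real work is converting $\mathcal{K}^{-p/(n-1)}$ to $(S_{n-1}^{\circ})^{-p/(n-1)}$ up to scalar factors. The plan is to use identity (\ref{e: h and h dual}), which gives $h_{ij}=\frac{r^3}{\sqrt{r^2+|\bar\nabla r|^2}}\,\mathfrak{r}^{\circ}_{ij}$, and hence
\[
\det(h_{ij}) \;=\; \left(\frac{r^3}{\sqrt{r^2+|\bar\nabla r|^2}}\right)^{n-1}\det(\mathfrak{r}^{\circ}_{ij}).
\]
In radial parametrization, $\mathcal{K}=\det(h_{ij})/\det(g_{ij})$. The matrix determinant lemma applied to formula~(1) gives $\det(g_{ij})=r^{2(n-2)}(r^2+|\bar\nabla r|^2)\det(\bar g_{ij})$, and by definition $S_{n-1}^{\circ}=\det(\mathfrak{r}^{\circ}_{ij})/\det(\bar g_{ij})$. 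Combining these three ingredients yields
\[
\mathcal{K} \;=\; \frac{r^{n+1}}{(r^2+|\bar\nabla r|^2)^{(n+1)/2}}\,S_{n-1}^{\circ}.
\]

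Finally, I would combine everything: raise the last identity to the power $-p/(n-1)$, multiply by the prefactor $\sqrt{r^2+|\bar\nabla r|^2}/r^3$ inherited from Lemma~\ref{lem: ev r} and the factor $-1/r^2$ from $\partial_t(1/r)$, then translate $r\mapsto 1/s^{\circ}$ and $r^2+|\bar\nabla r|^2\mapsto (s^{\circ\,2}+|\bar\nabla s^{\circ}|^2)/s^{\circ\,4}$. The only real obstacle is bookkeeping of the exponents: the power of $(s^{\circ\,2}+|\bar\nabla s^{\circ}|^2)$ must collapse to $\tfrac{(n+1)p}{2(n-1)}+\tfrac12$ and the power of $s^{\circ}$ must collapse to $-\bigl(\tfrac{(n+1)p}{n-1}-1\bigr)$, which is a routine but error-prone arithmetic that I would carry out by accumulating exponents of $s^{\circ}$ and of $(s^{\circ\,2}+|\bar\nabla s^{\circ}|^2)$ separately at each substitution step before combining.
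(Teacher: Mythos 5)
Your proposal is correct and follows essentially the same route as the paper: differentiate $s^{\circ}=1/r$, invoke Lemma~\ref{lem: ev r}, convert $\mathcal{K}$ to $S_{n-1}^{\circ}$ via identity (\ref{e: h and h dual}) together with $\mathcal{K}=\det h_{ij}/\det g_{ij}$ and $\det g_{ij}=r^{2n-4}(r^2+|\bar{\nabla}r|^2)\det\bar{g}_{ij}$, and finally substitute $r=1/s^{\circ}$. Your intermediate identity $\mathcal{K}=r^{n+1}(r^2+|\bar{\nabla}r|^2)^{-(n+1)/2}S_{n-1}^{\circ}$ and the stated target exponents are consistent with the paper's computation, so the remaining bookkeeping indeed closes.
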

\begin{proof}
By means of the identities
\[\mathcal{K}=\frac{\det h_{ij}}{\det g_{ij}},~~
 \frac{1}{S_{n-1}^{\circ}}=\frac{\det \bar{g}_{ij}}{\det \mathfrak{r}^{\circ}_{ij}},~~
\frac{\det \bar{g}_{ij}}{\det g_{ij}}=\frac{1}{r^{2n-4}(r^2+|\bar{\nabla}r|^2)},\]
 equation (3), equation (\ref{e: h and h dual}), and Lemma \ref{lem: ev r} calculate
\begin{align*}
\partial_t s^{\circ}(z,t)&=\partial_t \frac{1}{r(z)}\\
&=-\frac{\sqrt{r^2+|\bar{\nabla}r|^2}}{r^3}
\frac{\Phi(\mathbf{n}_{K_t})}{\mathcal{K}^{\frac{p}{n-1}}}\\
&=-\Phi\frac{\sqrt{r^2+|\bar{\nabla}r|^2}}{r^3}\left(\frac{\det g_{ij}}{\det h_{ij}}\right)^{\frac{p}{n-1}}\\
&=-\Phi\frac{\sqrt{r^2+|\bar{\nabla}r|^2}}{r^3}\left(\frac{\det \bar{g}_{ij}}{\det \mathfrak{r}^{\circ}_{ij}}\right)^{\frac{p}{n-1}}
\left(\frac{\det g_{ij}}{\det \bar{g}_{ij}}\right)^{\frac{p}{n-1}}
\left(\frac{\det \mathfrak{r}^{\circ}_{ij}}{\det h_{ij} }\right)^{\frac{p}{n-1}}\\
&=-\Phi\cdot\left(\frac{\sqrt{r^2+|\bar{\nabla}r|^2}}{r^3}\right)^{p+1}\left(\frac{1}{S_{n-1}^{\circ}}\right)^{\frac{p}{n-1}}
\left(r^{2n-4}(r^2+|\bar{\nabla}r|^2)\right)^{\frac{p}{n-1}}\\
&=-\Phi(\mathbf{n}_{K_t})\left(r^2+|\bar{\nabla}r|^2\right)^{\frac{p+1}{2}+\frac{p}{n-1}}r^{-\frac{(n+1)p}{n-1}-3}\left(S_{n-1}^{\circ}\right)^{-\frac{p}{n-1}}\\
&=-\Phi\left(\frac{s^{\circ}z+\bar{\nabla}s^{\circ}}{\sqrt{s^{\circ~2}+|\bar{\nabla}s^{\circ}|^2}}\right)
\left(\frac{\left(s^{\circ~2}+ |\bar{\nabla}s^{\circ}|^2\right)^{\frac{(n+1)p}{2(n-1)}+\frac{1}{2}}}{s^{\circ~ \frac{(n+1)p}{n-1}-1}}\right)\left(S_{n-1}^{\circ}\right)^{-\frac{p}{n-1}},
\end{align*}
where on the last line, we replaced $\displaystyle r$ by $\displaystyle\frac{1}{s^{\circ}}$.
\end{proof}
 \section{Bounding the isoperimetric ratio}
In this section, we assume that $\Phi\equiv 1.$ We recall the following result of Chow and Gulliver from \cite{BG}.
\begin{proposition}\label{prop: chow}\cite{BG} There exists a positive constant $C$ depending only on the initial condition $s(\cdot,0)$ such that, as long as the solution to (\ref{e: flow1}) exists and $\mathfrak{r}_{ij}>0$ on $\mathbb{S}^{n-1}\times [0,T),$ the support function $s(\cdot,t)$ satisfies the uniform gradient estimate $||\bar{\nabla} s(\cdot,t)||\leq C,$
$\mathbb{S}^{n-1}\times [0,T).$ In particular, since the diameter of $~\mathbb{S}^{n-1}$ is $\pi$, we have
\[\max\limits_{\mathbb{S}^{n-1}} s(\cdot,t)-\min\limits_{\mathbb{S}^{n-1}} s(\cdot,t)\leq C\pi\]
for all $t\in[0,T).$
\end{proposition}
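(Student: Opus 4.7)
I would first reduce the gradient bound to a bound on $s_{\max}(t):=\max_{\mathbb S^{n-1}}s(\cdot,t)$, using the geometric identity $x(\mathbf n,t)=s(\mathbf n,t)\,\mathbf n+\bar\nabla s(\mathbf n,t)$ coming from the inverse Gauss parametrization of $\partial K_t$. Taking squared norms and using $\mathbf n\perp\bar\nabla s(\mathbf n)$,
\[
|\bar\nabla s(\mathbf n,t)|^{2}=|x(\mathbf n,t)|^{2}-s(\mathbf n,t)^{2}\leq r_{\max}(t)^{2},
\]
where $r_{\max}(t)$ is the outer radius of $K_t$ from the origin. Since $r(z,t)z\in K_t$ for every $z\in\mathbb S^{n-1}$, one has $s(z,t)\geq r(z,t)$; hence $r_{\max}(t)\leq s_{\max}(t)$, and in particular $\|\bar\nabla s(\cdot,t)\|_{\infty}\leq s_{\max}(t)$.

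\textbf{Controlling the extrema.} At a point $z_{\star}$ realizing $s_{\max}(t)$, $\bar\nabla^{2}s(z_{\star},t)\leq 0$, so $\mathfrak r_{ij}(z_{\star},t)\leq s_{\max}(t)\,\bar g_{ij}$. Monotonicity of $G=S_{n-1}^{p/(n-1)}$ in its arguments then gives $\partial_{t}s(z_{\star},t)\leq s_{\max}(t)^{p}$, whence by Hamilton's parabolic maximum principle
\[
\frac{d^{+}}{dt}\,s_{\max}(t)\leq s_{\max}(t)^{p},\qquad s_{\max}(t)\leq\bigl(s_{\max}(0)^{1-p}+(1-p)t\bigr)^{1/(1-p)}.
\]
A symmetric argument at a minimum point yields $s_{\min}(t)\geq(s_{\min}(0)^{1-p}+(1-p)t)^{1/(1-p)}$, keeping the origin in the interior of $K_{t}$ with a quantitative lower bound. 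The previous two steps already produce a gradient bound (depending on the initial data and on $T$), and the oscillation statement $\max s-\min s\leq C\pi$ then follows at once by integrating $\bar\nabla s$ along a minimizing arc on $\mathbb S^{n-1}$ of length at most $\pi$.

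\textbf{Main obstacle.} The delicate point is securing a constant $C$ that depends only on $s(\cdot,0)$ rather than also on $T$. Here I would use that $0<p<1$ forces the anisotropy to contract: combining the two extremal estimates,
\[
\frac{d}{dt}\log\frac{s_{\max}(t)}{s_{\min}(t)}\leq s_{\max}(t)^{p-1}-s_{\min}(t)^{p-1}\leq 0,
\]
so the ratio $s_{\max}/s_{\min}$ is non-increasing. Feeding this monotonicity into an Alexandrov moving-planes argument (admissible because the flow with $\Phi\equiv 1$ is isotropic) is the route I would take to convert the crude bound above into a time-uniform oscillation estimate. This reflection step is the only nontrivial ingredient; the rest of the proof is a direct application of the parabolic maximum principle.
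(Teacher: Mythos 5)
This proposition is quoted from Chow--Gulliver \cite{BG}; the paper gives no proof of its own, and the content of the cited result is exactly the Aleksandrov reflection argument that you name in your final paragraph and then defer. That reflection step is not an add-on to your maximum-principle computations --- it \emph{is} the proof, and it is self-contained: for every direction $e\in\mathbb{S}^{n-1}$ and every $a\geq C(K_0)$ one shows that the reflection of $\partial K_t\cap\{x\cdot e>a\}$ across the hyperplane $\{x\cdot e=a\}$ lies inside $K_t$ at $t=0$, and that this inclusion is preserved in time because the flow with $\Phi\equiv 1$ is invariant under the reflection, so the difference between the solution and its reflected copy satisfies a linear parabolic equation to which the strong maximum principle and the Hopf lemma apply; the time-uniform bound $\|\bar\nabla s(\cdot,t)\|\leq C$, with $C$ determined by the smallest such $a$ admissible for $K_0$, is then read off directly. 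None of this is carried out in your write-up, so the essential step is missing.

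Moreover, the preliminary estimates you do establish cannot be "fed into" that argument in the way you suggest, and one of them runs logically backwards. The bound $\|\bar\nabla s\|\leq s_{\max}(t)$ combined with $s_{\max}(t)\leq\bigl(s_{\max}(0)^{1-p}+(1-p)t\bigr)^{1/(1-p)}$ blows up as $t\to\infty$, so it never produces a constant depending only on $s(\cdot,0)$. The monotonicity of $s_{\max}/s_{\min}$ is correct but strictly weaker than the oscillation bound, since $s_{\max}-s_{\min}\leq\bigl(s_{\max}(0)/s_{\min}(0)-1\bigr)s_{\min}(t)$ and $s_{\min}(t)\to\infty$; in fact the ratio bound is essentially the content of Corollary \ref{cor: 1}, which the paper \emph{deduces from} Proposition \ref{prop: chow}, not the other way around. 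And if one tries to close the gap by the maximum principle alone, concavity of $x\mapsto x^{p}$ gives only $\tfrac{d}{dt}(s_{\max}-s_{\min})\leq p\,s_{\min}^{p-1}(s_{\max}-s_{\min})$, and Gronwall together with your lower bound on $s_{\min}$ yields growth of order $t^{p/(1-p)}$ rather than a constant. So the reflection argument is unavoidable, and as written the proposal does not prove the proposition.
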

Next is an immediate corollary of this proposition.
\begin{corollary}\label{cor: 1}
Assume that $\{K_t\}_{[0,T)}$ is a smooth, strictly convex solution of equation (\ref{e: flow1}) such that $\lim\limits_{t\to T}\min\limits_{\mathbb{S}^{n-1}} s(\cdot,t)=\infty.$ Then we have
\[\lim_{t\to T}\frac{\max\limits_{\mathbb{S}^{n-1}} s(\cdot,t)}{\min\limits_{\mathbb{S}^{n-1}} s(\cdot,t)}=1,~~\frac{\max\limits_{\mathbb{S}^{n-1}} s(\cdot,t)}{\min\limits_{\mathbb{S}^{n-1}} s(\cdot,t)}\leq C\]
for some positive constant $C.$
\end{corollary}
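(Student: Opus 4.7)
The plan is to deduce the corollary directly from Proposition \ref{prop: chow} with essentially no additional machinery; this is a straightforward consequence of the gradient estimate, so I would not expect a real obstacle.

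First I would write the ratio as
\[
\frac{\max_{\mathbb{S}^{n-1}} s(\cdot,t)}{\min_{\mathbb{S}^{n-1}} s(\cdot,t)} = 1 + \frac{\max_{\mathbb{S}^{n-1}} s(\cdot,t)-\min_{\mathbb{S}^{n-1}} s(\cdot,t)}{\min_{\mathbb{S}^{n-1}} s(\cdot,t)}.
\]
By Proposition \ref{prop: chow} the numerator is at most $C\pi$ uniformly in $t\in[0,T)$. Combined with the hypothesis $\min_{\mathbb{S}^{n-1}} s(\cdot,t)\to\infty$ as $t\to T$, the fraction tends to $0$, which immediately gives $\lim_{t\to T}\max s/\min s=1$.

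For the second assertion (uniform boundedness of the ratio on $[0,T)$), I would observe that with $\Phi\equiv 1$, the evolution equation (\ref{e: flow1}) reads $\partial_t s=S_{n-1}^{p/(n-1)}>0$ at every point, since $K_t$ is strictly convex. Therefore for each fixed $z\in\mathbb{S}^{n-1}$ the function $t\mapsto s(z,t)$ is non-decreasing, so
\[
\min_{\mathbb{S}^{n-1}} s(\cdot,t)\ \geq\ \min_{\mathbb{S}^{n-1}} s(\cdot,0)\ >\ 0,
\]
the positivity of the right-hand side coming from the standing assumption that $K_0$ encloses the origin. Substituting this into the previous display yields $\max s/\min s\leq 1+C\pi/\min s(\cdot,0)$, which is the desired uniform bound.

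In short, the entire argument is the identity above followed by Proposition \ref{prop: chow} for the numerator and monotonicity of $s$ for the denominator; there is no analytic difficulty to single out.
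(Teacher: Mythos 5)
Your proof is correct and follows the route the paper intends: the paper gives no explicit proof, calling the corollary ``immediate'' from Proposition \ref{prop: chow}, and your decomposition of the ratio as $1+(\max s-\min s)/\min s$ together with the uniform bound $\max s-\min s\leq C\pi$ is exactly that deduction. Your observation that $\partial_t s=\Phi S_{n-1}^{p/(n-1)}>0$ forces $\min s(\cdot,t)\geq\min s(\cdot,0)>0$, which supplies the uniform bound on the ratio, is a valid (and suitably quantitative) way to handle the second assertion.
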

\begin{corollary}\label{cor: 2}
Assume that $\{K_t\}_{[0,T)}$ is a smooth, strictly convex solution of equation (\ref{e: flow1}) such that $\lim\limits_{t\to T}\min\limits_{\mathbb{S}^{n-1}} s(\cdot,t)=\infty.$ Then there is a finite positive constant $C$ such that, the family of convex bodies $\left\{\tilde{K}_t:=\left(\frac{V(\mathbb{B})}{V(K_t)}\right)^{\frac{1}{n}}K_t\right\}_{[0,T)}$ satisfies
\[\frac{1}{C}\leq s_{\tilde{K}_t}\leq C\]
for all time.
\end{corollary}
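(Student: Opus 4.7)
The plan is to combine Corollary~\ref{cor: 1} with the elementary fact that, for a convex body $K$ containing the origin in its interior, both the inradius and the circumradius of $K$ with respect to the origin are encoded in the support function: one has the inclusions $\mathbb{B}_{\min s_K} \subset K \subset \mathbb{B}_{\max s_K}$, and consequently the volume sandwich
\[ V(\mathbb{B})\,(\min s_K)^n \;\le\; V(K) \;\le\; V(\mathbb{B})\,(\max s_K)^n. \]

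First, I would apply this sandwich to $K_t$ and take $n$-th roots to obtain
\[ \min_{\mathbb{S}^{n-1}} s_{K_t} \;\le\; \left(\frac{V(K_t)}{V(\mathbb{B})}\right)^{\!1/n} \;\le\; \max_{\mathbb{S}^{n-1}} s_{K_t}. \]
Since the support function is $1$-homogeneous under dilation, one has $s_{\tilde K_t}(z) = (V(\mathbb{B})/V(K_t))^{1/n}\, s_{K_t}(z)$. Dividing the previous display into $s_{K_t}(z)$ and using $\min s_{K_t} \le s_{K_t}(z) \le \max s_{K_t}$ gives, pointwise on $\mathbb{S}^{n-1}$,
\[ \frac{\min_{\mathbb{S}^{n-1}} s_{K_t}}{\max_{\mathbb{S}^{n-1}} s_{K_t}} \;\le\; s_{\tilde K_t}(z) \;\le\; \frac{\max_{\mathbb{S}^{n-1}} s_{K_t}}{\min_{\mathbb{S}^{n-1}} s_{K_t}}. \]

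Finally, Corollary~\ref{cor: 1} provides exactly a uniform bound $\max s_{K_t}/\min s_{K_t} \le C$ along the entire flow, which translates directly into $1/C \le s_{\tilde K_t}(z) \le C$, as required.

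There is essentially no obstacle here: the real work has already been done in Proposition~\ref{prop: chow} and Corollary~\ref{cor: 1}. The only non-tautological ingredient worth recording is the identification of $\max s_K$ with the circumradius of $K$ with respect to the origin; this follows from combining the Cauchy--Schwarz bound $s_K(z) = \max_{y \in K}\langle z,y\rangle \le \max_{y \in K} |y|$ with the observation that at a boundary point $p$ farthest from the origin the outward normal is the radial direction $p/|p|$, so that $s_K(p/|p|) = |p|$.
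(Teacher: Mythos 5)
Your proof is correct and follows essentially the same route as the paper: both arguments rest on the sandwich $\mathbb{B}_{\min s_K}\subset K\subset \mathbb{B}_{\max s_K}$ (the paper applies it to the normalized body $\tilde K_t$, whose volume is $\omega_n$, to get $\min s_{\tilde K_t}\leq 1\leq \max s_{\tilde K_t}$, whereas you apply it to $K_t$ and rescale afterwards) combined with the uniform bound on $\max s/\min s$ from Corollary~\ref{cor: 1}. No gaps.
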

\begin{proof}
The origin belongs to the interior of $ K_0$, so does to the interior of $K_t$ and $\tilde{K}_t$ as $K_0\subseteq K_t$. Since $V(\tilde{K}_t)=\omega_n,$ we must have $\min\limits_{\mathbb{S}^{n-1}} s_{\tilde{K}_{t}}\leq 1\leq\max\limits_{\mathbb{S}^{n-1}} s_{\tilde{K}_{t}}.$ On the other hand, the ratio $(\max\limits_{\mathbb{S}^{n-1}} s/\min\limits_{\mathbb{S}^{n-1}} s)$ is scaling-invariant, so from Corollary \ref{cor: 1} it follows that $(\max\limits_{\mathbb{S}^{n-1}} s_{\tilde{K}_t}/\min\limits_{\mathbb{S}^{n-1}} s_{\tilde{K}_t})\leq C.$ Therefore, $\frac{1}{C}\leq s_{\tilde{K}_t}\leq C.$
\end{proof}
\section{Upper and lower bounds on the principal curvatures}\label{sec: up and low}
In this section, we present a technique to obtain lower and upper bounds on the principal curvatures of the evolving convex bodies supposing that there are uniform lower and upper bounds on the evolving support functions. Our approach to derive lower and upper bounds consists of three steps. We first obtain an upper bound on the Gauss curvature using Tso's trick \cite{Tso}. We then continue to derive a lower bound on the Gauss curvature using the evolution equation of dual convex bodies (\ref{e: ev of dual}). In the last step, we apply the parabolic maximum principle to the evolution equation of $\mathfrak{r}^{ij}$; to be defined later in this section.
 \begin{lemma}[Upper bound on the Gauss curvature]\label{lem: upper G}
Assume that $\{K_t\}_{[0,t_0]}$ is a smooth, strictly convex solution of equation (\ref{e: flow1}) with $0<R_{-}\leq s_{K_t}\leq R_{+}< \infty$ for $t\in[0,t_0]$. Then
\[\mathcal{K}\leq C't^{\frac{n-1}{p-1}},\]
where $C$ is a constant depending on $p,R_{-},R_{+}$, and $\Phi.$
\end{lemma}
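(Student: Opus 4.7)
The plan is to implement Tso's trick in the form of an auxiliary function on $\mathbb{S}^{n-1}\times[0,t_0]$ that obeys a pure scalar ordinary differential inequality $\omega'(t)\le -c\,\omega(t)^{1+\delta}$, whose maximal solution starting at $+\infty$ at $t=0$ yields the claimed $t^{(n-1)/(p-1)}$ decay. Since I want an upper bound on $\mathcal{K}=F^{-(n-1)/p}$ (equivalently a lower bound on the speed $F=S_{n-1}^{p/(n-1)}$), I take $W:=F^{-1}$ and chase an upper bound on $\omega(t):=\max_{\mathbb{S}^{n-1}}W(\cdot,t)$.

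First, differentiating $\mathfrak{r}_{ij}=\bar\nabla_i\bar\nabla_j s+s\bar g_{ij}$ along (\ref{e: flow1}) gives $\partial_t\mathfrak{r}_{ij}=\bar\nabla_i\bar\nabla_j(\Phi F)+\Phi F\bar g_{ij}$; in the case $\Phi\equiv 1$ of the main theorem, the chain rule reduces this to
\begin{equation*}
\partial_t F-F^{ij}\bar\nabla_i\bar\nabla_j F=F\sum_i F^{ii}.
\end{equation*}
A direct computation then yields $\partial_t W-F^{ij}\bar\nabla_i\bar\nabla_j W\le -W\sum_iF^{ii}$. The main pointwise ingredient I need is
\begin{equation*}
\sum_iF^{ii}\ge p\,F^{(p-1)/p},
\end{equation*}
which I would prove by writing $F^{ii}=\tfrac{p}{n-1}F/\lambda_i$ in the eigenbasis of $[\mathfrak{r}_{ij}]$ (property 5) and applying AM--GM to $\sum_i\lambda_i^{-1}$; the exponent matches since $F=S_{n-1}^{p/(n-1)}$ implies $S_{n-1}^{-1/(n-1)}=F^{-1/p}$. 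Substituting and using $F^{(p-1)/p}=W^{(1-p)/p}$ produces the clean parabolic inequality $\partial_t W-F^{ij}\bar\nabla_i\bar\nabla_j W\le -p\,W^{1/p}$.

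Applying Hamilton's trick is then mechanical: at a spatial maximum of $W(\cdot,t)$ we have $\bar\nabla W=0$ and $F^{ij}\bar\nabla_i\bar\nabla_j W\le 0$ because $[F^{ij}]$ is positive definite, so $\omega$ satisfies $\omega'(t)\le -p\,\omega(t)^{1/p}$ in the Dini sense. Since $1/p>1$, the comparison ODE $u'=-p\,u^{1/p}$ with $u(0^+)=+\infty$ has the explicit separable solution $u(t)=((1-p)t)^{-p/(1-p)}$; ODE comparison forces $\omega(t)\le u(t)$ independently of the initial data, hence $F(\cdot,t)\ge((1-p)t)^{p/(1-p)}$ and $\mathcal{K}\le C'\,t^{(n-1)/(p-1)}$.

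The step I expect to be the main obstacle is the extension to general smooth positive $\Phi$: differentiating $\Phi F$ contributes additional terms of the form $F\,F^{ij}\bar\nabla_i\bar\nabla_j\Phi+2F^{ij}\bar\nabla_i\Phi\,\bar\nabla_jF$ to $\partial_tF$. While the first-order part vanishes at a spatial maximum of $W$, the Hessian part scales like $\sum_iF^{ii}$ and threatens to overwhelm the good term. Absorbing it requires multiplying $W$ by a suitable weight depending on $\Phi$ and on the support function (using the bounds $R_-\le s\le R_+$ together with the positivity of $\Phi$, and the evolution $\partial_t s=\Phi F>0$ which provides an extra negative contribution when a factor like $s-R_-/2$ is placed in the denominator); this is precisely where the dependence of $C'$ on $\Phi$ and on $R_\pm$ stated in the lemma originates. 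In the $\Phi\equiv 1$ case at the heart of the main theorem, this complication disappears and $C'$ depends only on $p$.
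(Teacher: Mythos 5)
Your argument for the case $\Phi\equiv 1$ is correct and takes a genuinely different, and in fact cleaner, route than the paper. The paper runs Tso's trick on the quotient $\Psi=\Phi S_{n-1}^{\beta}/(2R_{+}-s)$ and bounds its \emph{minimum} from below, which is why its constant depends on $R_{-}$, $R_{+}$ and $\Phi$. You dispense with the weight altogether: from $\partial_tF=F'_{ij}\bar\nabla_i\bar\nabla_jF+F\operatorname{tr}F'$ you pass to $W=F^{-1}$, and the single inequality $\operatorname{tr}F'\ge pF^{(p-1)/p}$ (property \textbf{5} together with AM--GM applied to $\sum_i\lambda_i^{-1}$, which is the same mean-curvature estimate $\mathcal{H}\ge(n-1)S_{n-1}^{-1/(n-1)}$ that the paper uses) turns the reaction term into $-pW^{1/p}$ with $1/p>1$. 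The resulting Riccati-type comparison, best seen via $v=\omega^{(p-1)/p}$ satisfying $v'\ge 1-p$, gives the universal bound $\max_{\mathbb{S}^{n-1}}W(\cdot,t)\le\bigl((1-p)t\bigr)^{-p/(1-p)}$, independent of the initial data and even of $R_{\pm}$. This isolates exactly the feature of homogeneity degree $p<1$ that makes the zeroth-order term strong enough on its own, and your exponent bookkeeping ($W=\mathcal{K}^{p/(n-1)}$, hence $\mathcal{K}\le(1-p)^{(n-1)/(p-1)}t^{(n-1)/(p-1)}$) is right. Since the Main Theorem only needs $\Phi\equiv1$, what you buy is a shorter proof with a sharper, data-free constant; what the paper's weighted version buys is the general $\Phi$ stated in the lemma.

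That general-$\Phi$ extension is the one genuine gap, and the mechanism you sketch for it would not work as described. The extra negative term produced by placing $s-R_{-}/2$ in the denominator has size $\Psi\cdot\Phi F/(s-R_{-}/2)$, which degenerates precisely in the dangerous regime $F\to 0$ (i.e.\ $W\to\infty$), so it cannot absorb a bad term of size $W\,\lVert\bar\nabla^2\Phi\rVert\operatorname{tr}F'\sim W^{1/p}$. What actually rescues the anisotropic case in the paper is different: the spatial second-derivative test is applied to the full quotient $\Phi S_{n-1}^{\beta}/(2R_{+}-s)$ at its critical point, yielding $\bar\nabla_i\bar\nabla_j(\Phi S_{n-1}^{\beta})+\bar g_{ij}\Phi S_{n-1}^{\beta}\ge\bigl(-\Phi S_{n-1}^{\beta}\mathfrak{r}_{ij}+2R_{+}\Phi S_{n-1}^{\beta}\bar g_{ij}\bigr)/(2R_{+}-s)$, which controls the entire Hessian of the speed, $\bar\nabla^2\Phi$ contribution included, without ever isolating it; the $\Phi$- and $R_{\pm}$-dependence then enters only through the lower bound on $\mathcal{H}$ in terms of $\Psi$. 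If you intend your lemma to cover general $\Phi$, you should adopt this quotient structure rather than trying to beat the $F\bar\nabla^2\Phi$ term with the time derivative of the weight. For the theorem actually proved in the paper, however, your weight-free argument suffices.
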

\begin{proof}
Define $\beta=\frac{p}{n-1}.$ We consider the function
\[\Psi=\frac{\Phi S_{n-1}^{\beta}}{2R_{+}-s}.\]
Differentiating $\Phi$ at the point where the minimum of $\Psi$ occurs yields:
\[0=\bar{\nabla}_i\Psi=\bar{\nabla}_i \left(\frac{\Phi S_{n-1}^{\beta}}{2R_{+}-s}\right)\ \ \ {\hbox{and}}\ \ \  \bar{\nabla}_i\bar{\nabla}_j \Psi\geq 0,\]
hence we obtain
\[ \frac{\bar{\nabla}_i (\Phi S_{n-1}^{\beta})}{2R_{+}-s}=-\frac{\Phi S_{n-1}^{\beta} \bar{\nabla}_i s}{(2R_{+}-s)^2}, \] and
\begin{equation}\label{e: tso dual}
\bar{\nabla}_i\bar{\nabla}_j (\Phi S_{n-1}^{\beta})+\bar{g}_{ij} (\Phi S_{n-1}^{\beta})\geq
\frac{-\Phi S_{n-1}^{\beta}\mathfrak{r}_{ij}+2R_{+}\Phi S_{n-1}^{\beta}\bar{g}_{ij}}{2R_{+}-s}.
\end{equation}
When we differentiate $\Phi$ with respect to $t$ we conclude
\begin{align*}
\partial_t\Psi&=\frac{\beta \Phi S_{n-1}^{\beta-1}}{2R_{+}-s}({S}_{n-1})'_{ij}
\left[\bar{\nabla}_i\bar{\nabla}_j\left(\Phi S_{n-1}^{\beta}\right)+\bar{g}_{ij}\left(\Phi S_{n-1}^{\beta}\right)\right]
+\frac{\Phi^2S_{n-1}^{2\beta}}{(2R_{+}-s)^2},
\end{align*}
 where $\displaystyle (S_{n-1})'_{ij} := \frac{\partial S_{n-1}}{\partial  \mathfrak{r}_{ij}}$ is the derivative of the $S_{n-1}$ with respect to the entry $ \mathfrak{r}_{ij}$ of the radii of curvature matrix.
Applying inequality (\ref{e: tso dual}) to the preceding identity we deduce
\begin{equation}\label{e: last step tso dual}
\partial_t\Psi\geq \Psi^2\left(1-(n-1)\beta+2\beta R_{+}\mathcal{H}\right).
\end{equation}
Now, we estimate the mean curvature $\mathcal{H}=\bar{g}_{ij}\mathfrak{r}^{ij}=\sum_{i=1}^{n-1}\frac{1}{\lambda_i}$ from below by a negative power of $\Psi.$ We obtain
\begin{align*}
\mathcal{H}&\geq (n-1)\left(\frac{2R_{+}-s}{\Phi S_{n-1}^{\beta}}\right)^{\frac{1}{(n-1)\beta}}\left(\frac{\Phi}
{2R_{+}-s}\right)^{\frac{1}{(n-1)\beta}}\\
&\geq (n-1)\Psi^{-\frac{1}{(n-1)\beta}} \left(\frac{\min\limits_{\mathbb{S}^{n-1}} \Phi}{2R_{+}-R_{-}}\right)^{\frac{1}{(n-1)\beta}}.
\end{align*}
Consequently, inequality (\ref{e: last step tso dual}) can be rewritten as follows
\begin{align*}
\partial_t\Psi&\geq \Psi^2\left(1-(n-1)\beta+2(n-1)\beta R_{+}\Psi^{-\frac{1}{(n-1)\beta}} \left(\frac{\min\limits_{\mathbb{S}^{n-1}} \Phi}{2R_{+}-R_{-}}\right)^{\frac{1}{(n-1)\beta}}\right)\\
&=\Psi^2\left(C(p)+C'(p,R_{-},R_{+},\Phi)\Psi^{-\frac{1}{p}}\right),
\end{align*} for positive constants $C(p)$ and $C'(p,R_{-},R_{+},\Phi)$. \newline
Hence
\[\partial_t\left(\frac{1}{\Psi}\right)\leq -C'(p,R_{-},R_{+},\Phi)\left(\frac{1}{\Psi}\right)^{\frac{1}{p}}-C(p).\]
From this, it follows that
\[\frac{1}{\Psi}\leq Ct^{\frac{p}{p-1}}\]
for a new constant $C$. Equivalently, we have a bound for $\Psi$ from below. Therefore, we have bounded $\mathcal{K}$ from above in terms of $p,R_{-}, R_{+}$, $\Phi$ and time.
\end{proof}
Before we can obtain a lower bound on the Gauss curvature, we need a preparatory lemma.
\begin{lemma}
Assume that $\{K_t\}_{[0,t_0]}$ is a smooth, strictly convex solution of
\[\partial_t s=-\Phi\left(\frac{sz+\bar{\nabla}s}{\sqrt{s^{2}+|\bar{\nabla}s|^2}}\right)
\left(\frac{\left(s^{2}+ |\bar{\nabla}s|^2\right)^{\frac{(n+1)p}{2(n-1)}+\frac{1}{2}}}{s^{ \frac{(n+1)p}{n-1}-1}}\right)\left(S_{n-1}\right)^{\beta},\]
with $0<R_{-}\leq s_{K_t}\leq R_{+}< \infty$ for $t\in[0,t_0]$.
Here, $\beta$ is a negative real number. Then we have an upper bound on the Gauss curvature depending on $\beta,n,p,R_{-},R_{+}$, and $\Phi.$
\end{lemma}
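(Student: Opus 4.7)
The plan is to adapt the Tso-type auxiliary function argument of Lemma~\ref{lem: upper G} to this shrinking-type flow. Denote the bracketed prefactor by
\[F_1(s,\bar{\nabla}s,z):=\Phi\!\left(\frac{sz+\bar{\nabla}s}{\sqrt{s^{2}+|\bar{\nabla}s|^2}}\right)\frac{(s^{2}+|\bar{\nabla}s|^2)^{\frac{(n+1)p}{2(n-1)}+\frac{1}{2}}}{s^{\frac{(n+1)p}{n-1}-1}},\]
and write $\tilde F:=F_1\, S_{n-1}^{\beta}$, so that the equation reads $\partial_t s=-\tilde F$. Since $sz+\bar{\nabla}s$ is the contact point of $K$, the inclusion $sz+\bar{\nabla}s\in K\subseteq\mathbb{B}_{R_+}$ together with $s\ge R_-$ forces $|\bar{\nabla}s|\le R_+$, so $F_1$ and its first partials in $(s,\bar{\nabla}s)$ are uniformly bounded on the relevant set by constants depending only on $n,p,R_\pm,\Phi$. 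Since $\beta<0$, the matrix $-F_1\beta S_{n-1}^{\beta-1}(S_{n-1})'_{ij}$ is positive-definite, so the equation is strictly parabolic, and $\tilde F$ grows with $\mathcal{K}$.

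I would set $\delta:=R_-/2$ and consider $\Psi:=\tilde F/(s-\delta)>0$. At a spatial maximum of $\Psi(\cdot,t)$ the conditions $\bar{\nabla}\Psi=0$ and $\bar{\nabla}^2\Psi\le 0$ give $\bar{\nabla}_k\tilde F=\Psi\bar{\nabla}_k s$ and $\bar{\nabla}_i\bar{\nabla}_j\tilde F+\bar{g}_{ij}\tilde F\le\Psi(\mathfrak{r}_{ij}-\delta\bar{g}_{ij})$. Differentiating $\tilde F$ in time via $\partial_t\mathfrak{r}_{ij}=-\bar{\nabla}_i\bar{\nabla}_j\tilde F-\bar{g}_{ij}\tilde F$, contracting the matrix inequality with the positive-definite matrix $-F_1\beta S_{n-1}^{\beta-1}(S_{n-1})'_{ij}$, and invoking the Euler relation $(S_{n-1})'_{ij}\mathfrak{r}_{ij}=(n-1)S_{n-1}$ together with $(S_{n-1})'_{ij}\bar{g}_{ij}=S_{n-1}\mathcal{H}$, I obtain
\[\partial_t\tilde F\le\Psi\tilde F\bigl(-(n-1)\beta+\beta\delta\mathcal{H}\bigr)+E,\]
where the error $E=S_{n-1}^{\beta}\partial_t F_1$, after substituting $\partial_t s=-\tilde F$ and $\bar{\nabla}_k\tilde F=\Psi\bar{\nabla}_k s$ into the chain rule for $F_1$, is controlled by $C(\tilde F^2+\tilde F\Psi)$. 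Combining with $\partial_t\Psi=\partial_t\tilde F/(s-\delta)+\Psi^2$ and $\tilde F=\Psi(s-\delta)\le R_+\Psi$ yields $\partial_t\Psi\le\Psi^2\bigl(C_3+\beta\delta\mathcal{H}\bigr)$ at the maximum.

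The decisive ingredient is the AM--GM inequality $\mathcal{H}\ge(n-1)S_{n-1}^{-1/(n-1)}$. Since $S_{n-1}^{\beta}=\tilde F/F_1$ with $F_1$ bounded and $\beta<0$, and $\tilde F\ge(R_-/2)\Psi$, one gets $S_{n-1}^{-1/(n-1)}\ge c\,\Psi^{1/(|\beta|(n-1))}$ for $\Psi$ large, and hence $\beta\delta\mathcal{H}\le -c'\Psi^{1/(|\beta|(n-1))}$. The inequality for $\max_{\mathbb{S}^{n-1}}\Psi$ therefore takes the schematic form $\partial_t\max_{\mathbb{S}^{n-1}}\Psi\le C_3\Psi^2-c'\Psi^{2+1/(|\beta|(n-1))}$, from which standard ODE comparison yields an a priori bound of the form $\Psi\le C\,t^{-|\beta|(n-1)/(1+|\beta|(n-1))}$ on $(0,t_0]$ with $C$ depending only on $n,p,\beta,R_\pm,\Phi$. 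This bounds $\tilde F$ above and hence $\mathcal{K}$. The main technical obstacle I anticipate is the bookkeeping of the error $E$, which has no analogue in the uniform-speed Tso argument and arises from the dependence of $F_1$ on $z,s,\bar{\nabla}s$; the bounds $R_\pm$ together with the automatic estimate $|\bar{\nabla}s|\le R_+$ confine $(s,\bar{\nabla}s)$ to a compact set on which $F_1$ and its first partials are bounded, so $E$ is at worst quadratic in $\Psi$ and is dominated by the strictly superlinear mean-curvature contribution once $\Psi$ exceeds a threshold. Notably, no concavity of $S_{n-1}^{\beta}$ is needed; only the monotonicity that guarantees parabolicity is used.
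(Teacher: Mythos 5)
Your proposal is correct and follows essentially the same route as the paper: the Tso-type auxiliary function $\Psi=-\partial_t s/(s-R_-/2)$, the maximum-principle inequalities contracted against $-\beta F_1 S_{n-1}^{\beta-1}(S_{n-1})'_{ij}$, control of the prefactor's time derivative via $\|\bar{\nabla}s\|\le R_+$ (from $\|sz+\bar{\nabla}s\|^2=s^2+\|\bar{\nabla}s\|^2\le R_+^2$), the bound $\mathcal{H}\ge (n-1)S_{n-1}^{-1/(n-1)}\ge c\,\Psi^{1/((n-1)|\beta|)}$, and ODE comparison yielding the same exponent $-\tfrac{(n-1)|\beta|}{1+(n-1)|\beta|}$. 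No substantive differences.
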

\begin{proof}
Define $f:= \Phi\left(\frac{sz+\bar{\nabla}s}{\sqrt{s^{~2}+|\bar{\nabla}s|^2}}\right)$, $g:=\left(\frac{\left(s^{2}+ |\bar{\nabla}s|^2\right)^{\frac{(n+1)p}{2(n-1)}+\frac{1}{2}}}{s^{\frac{(n+1)p}{n-1}-1}}\right)$, $Q:=-\partial_t s$, and
consider the function
\[\Psi=\frac{Q}{s-R_{-}/2}.\]
Using the maximum principle, we are going to prove that $\Psi$ is bounded from above by a function of $n, p, R_{-}, R_{+}$, $\Phi$ and time.
At the point where the maximum of $\Psi$ occurs, we have
\[0=\bar{\nabla}_i\Psi=\bar{\nabla}_i \left(\frac{Q}{s-R_{-}/2}\right)\ \ \ {\hbox{and}}\ \ \  \bar{\nabla}_i\bar{\nabla}_j \Psi\leq 0.\]
Hence, we obtain
\begin{equation}\label{e: grad of Q}
\frac{\bar{\nabla}_i Q}{s-R_-/2}=\frac{Q \bar{\nabla}_i s}{(s-R_-/2)^2},
\end{equation}
and consequently,
\begin{equation}\label{e: tso}
\bar{\nabla}_i\bar{\nabla}_jQ+\bar{g}_{ij}Q\leq
\frac{Q\mathfrak{r}_{ij}-R_{-}/2Q
\bar{g}_{ij}}{s-R_{-}/2}.
\end{equation}
We calculate
\begin{align*}
\partial_t\Psi=&-\frac{\beta fgS_{n-1}^{\beta-1}}{s-R_{-}/2}(S_{n-1})'_{ij}
\left[\bar{\nabla}_i\bar{\nabla}_jQ+\bar{g}_{ij}
Q\right]\\
&+\frac{S_{n-1}^{\beta}}{s-R_{-}/2}\partial_t \left(fg\right)+\Psi^2.
\end{align*}
It is not difficult to see that
\[\frac{S_{n-1}^{\beta}}{s-R_{-}/2}\partial_t \left(fg\right)\leq C(n,p,R_-,R_{+},\Phi,f,g)\Psi^2.\]
 Indeed, we only need to take into account that
\[|\partial_t s|=Q,~~ \|\bar{\nabla}\partial_t s\|=\|\bar{\nabla}Q\|=\frac{Q ||\bar{\nabla} s||}{s-R_-/2},~~||\bar{\nabla} s||\leq R_{+},\]
 where for the second equation we used (\ref{e: grad of Q}), and for the third inequality we considered the fact that $||x||^2=s^2+||\bar{\nabla}s||^2\leq R_+^2$ (In fact, $x=sz+\bar{\nabla}s$ and $\langle z,\bar{\nabla}s\rangle=0$, so $||x||^2=s^2+||\bar{\nabla}s||^2$.).
Using this and inequality (\ref{e: tso}) we infer that, at the point where the maximum of $\Psi$ is reached, we have
\begin{equation}\label{e: last step tso}
\partial_t\Psi\leq\Psi^2\left(C'+\frac{\beta R_{-}}{2}\mathcal{H}\right).
\end{equation}
We can control the mean curvature $\mathcal{H}=\sum_{i=1}^{n-1}\frac{1}{\lambda_i}$ from below by a positive power of $\Psi.$ Notice that $\mathcal{H}\geq \frac{n-1}{S_{n-1}^{\frac{1}{n-1}}}.$ This yields
\begin{align*}
\mathcal{H}&\geq (n-1)\left(\frac{s-R_{-}/2}{Q}\right)^{\frac{1}{(n-1)\beta}}
\left(\frac{fg}{s-R_{-}/2}\right)^{\frac{1}{(n-1)\beta}}\\
&\geq (n-1)\Psi^{-\frac{1}{(n-1)\beta}} \left(\frac{C''}{R_{-}-R_{-}/2}\right)^{\frac{1}{(n-1)\beta}}.
\end{align*}
Therefore, we can rewrite the inequality (\ref{e: last step tso}) as follows
\begin{align*}
\partial_t\Psi&\leq \Psi^2\left(C_1-C_2\Psi^{-\frac{1}{(n-1)\beta}} \right)\\
&=-\Psi^2\left(C_2\Psi^{-\frac{1}{(n-1)\beta}}-C_1\right),
\end{align*} for positive constants $C_1$ and $C_2$.
Hence,
\begin{equation}\label{ie: upper on psi}
\Psi\leq \max\left\{C_3,C_4t^{-\frac{(n-1)\beta}{(n-1)\beta-1}}\right\}
\end{equation}
for new constants $C_3$ and $C_4$. The corresponding claim for $\mathcal{K}$ follows.
\end{proof}

\begin{lemma}[Lower bound on the Gauss curvature]\label{lem: lower G}
Assume that $\{K_t\}_{[0,t_0]}$ is a smooth, strictly convex solution of equation (\ref{e: flow1}) with $0<R_{-}\leq s_{K_t}\leq R_{+}< \infty$ for $t\in[0,t_0]$. Then
\[\mathcal{K}\geq \left(\frac{1}{C+C't^{-\frac{p}{1+p}}}\right)^{\frac{n-1}{p}},\]
for constants depending on $n,p,R_{-}$, $R_{+}$, and $\Phi.$
\end{lemma}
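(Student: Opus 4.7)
The plan is to apply the preceding preparatory lemma to the dual family $\{K_t^\circ\}_{[0,t_0]}$, and then transport the resulting upper Gauss curvature bound back to a lower bound for $\{K_t\}$ via the identity (\ref{e: h and h dual}). The two ingredients are already on the table: the dual evolution equation (\ref{e: ev of dual}) has exactly the form required by the preparatory lemma, with $\beta=-p/(n-1)<0$; and (\ref{e: h and h dual}) relates the principal curvatures of $K_t$ to the radii of curvature of $K_t^\circ$.

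First I would translate the hypothesis $R_-\le s_{K_t}\le R_+$ into the analogous bounds for the dual. Since $K_t$ is sandwiched between $\mathbb{B}_{R_-}$ and $\mathbb{B}_{R_+}$, its radial function satisfies $R_-\le r\le R_+$, and so $s_{K_t^\circ}=1/r$ satisfies $1/R_+\le s_{K_t^\circ}\le 1/R_-$. The standard Lipschitz estimate for support functions bounds $|\bar{\nabla} s_{K_t^\circ}|$ in terms of $R_-$, and dually $|\bar{\nabla} r|=|\bar{\nabla} s_{K_t^\circ}|/(s_{K_t^\circ})^2$ is then controlled in terms of $R_\pm$.

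Next I would invoke the preparatory lemma directly on $s_{K_t^\circ}$. Its conclusion (\ref{ie: upper on psi}) reads $\Psi^\circ\le \max\{C_3,C_4\,t^{-(n-1)\beta/((n-1)\beta-1)}\}$; with $(n-1)\beta=-p$ the exponent simplifies to $-p/(p+1)$, matching the target. Unpacking $\Psi^\circ$ and isolating the factor $(S_{n-1}^\circ)^{-p/(n-1)}$ inside $\partial_t s_{K_t^\circ}$ — all other factors being pinched between positive constants by the first step — this delivers
\[
S_{n-1}^\circ \ \geq\ \bigl(C+C'\,t^{-p/(1+p)}\bigr)^{-(n-1)/p}.
\]

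The last step is the conversion back to $\mathcal{K}$. Taking determinants in (\ref{e: h and h dual}) and combining with $\mathcal{K}=\det h_{ij}/\det g_{ij}$ and $\det g_{ij}/\det\bar{g}_{ij}=r^{2n-4}(r^2+|\bar{\nabla} r|^2)$ yields the pointwise identity $S_{n-1}^\circ=(r^2+|\bar{\nabla} r|^2)^{(n+1)/2}\,r^{-(n+1)}\,\mathcal{K}$. Since the prefactor is pinched between positive constants depending only on $R_\pm$, the lower bound on $S_{n-1}^\circ$ transfers (with new constants) to the announced lower bound on $\mathcal{K}$. There is no real conceptual obstacle; the main bookkeeping is tracking how $R_\pm$ and $\Phi$ enter the constants $C,C'$ under the duality switch, since both the preparatory lemma and the identity (\ref{e: h and h dual}) have been tailored for exactly this dualization.
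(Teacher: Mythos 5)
Your proposal is correct and follows essentially the same route as the paper: translate the support function bounds to the dual body, apply the preparatory lemma to the dual evolution equation (\ref{e: ev of dual}) with $\beta=-\tfrac{p}{n-1}$ (so that $-\tfrac{(n-1)\beta}{(n-1)\beta-1}=-\tfrac{p}{p+1}$), and convert the resulting upper bound on $\mathcal{K}^{\circ}$ into a lower bound on $\mathcal{K}$. The only cosmetic difference is the last step, where the paper cites Kaltenbach's identity while you rederive the equivalent pointwise relation $S_{n-1}^{\circ}=(r^2+|\bar{\nabla}r|^2)^{\frac{n+1}{2}}r^{-(n+1)}\mathcal{K}$ directly from (\ref{e: h and h dual}); both versions give the claimed bound with constants depending only on $n,p,R_{-},R_{+},\Phi$.
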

\begin{proof}
We apply the previous lemma to the evolution equation (\ref{e: ev of dual}): Observe that as $0<R_{-}\leq s_{K_t}\leq R_{+}< \infty$ we have
\[0<\frac{1}{R_+}\leq s_{K^{\circ}_t}\leq \frac{1}{R_-}< \infty.\]
Now, using (\ref{ie: upper on psi}) with $\beta=-\frac{p}{n-1}$, we obtain that
\[\mathcal{K}^{\circ ~\frac{p}{n-1}}\leq C_3+C_4t^{-\frac{p}{p+1}}.\]
Therefore, we have bounded from above $\mathcal{K}^{\circ}$ in terms of $n,p,R_{-}, R_{+}, \Phi$, and time. To complete the proof, we recall Kaltenbach's identity: for every $x \in \partial K$, there exists an $x^\circ \in \partial K^\circ$ such that
\[\left(\frac{\mathcal{K}}{s^{n+1}}\right)(x)\left(\frac{\mathcal{K}^{\circ}}{s^{\circ n+1}}\right)(x^{\circ})=1,\]
where $x$ and $x^{\circ}$ are related by $\langle x, x^{\circ}\rangle=1$ (A proof of this identity can be found in \cite{Kal}, see also \cite{H,LR}.).\newline
By the above identity, we conclude that $\mathcal{K}$ is bounded from below by constants depending on $n,p,R_{-}, R_{+},\Phi$, and time.
\end{proof}

Denote the inverse matrix of $[\mathfrak{r}_{ij}]_{1\leq i,j\leq n-1}$ by $[\mathfrak{r}^{ij}]_{1\leq i,j\leq n-1}.$ We recall the following evolution equation of $\mathfrak{r}^{ij}$ from \cite{BT} as $\{K_t\}_t$ evolves under (\ref{e: flow1}).
\begin{align*}
 \partial_t \mathfrak{r}^{ij}=&\Phi F'_{kl}\bar{\nabla}_k\bar{\nabla}_l \mathfrak{r}^{ij}-\Phi(F+F'_{kl}\mathfrak{r}_{kl})
 \mathfrak{r}^{ip}\mathfrak{r}^{jp}
+\Phi(tr F')\mathfrak{r}^{ij}\\
&-\Phi\mathfrak{r}^{ir}\mathfrak{r}^{js}(2F'_{km}\mathfrak{r}^{nl}+F''_{kl;mn})\bar{\nabla}_r\mathfrak{r}_{kl}
 \bar{\nabla}_s\mathfrak{r}_{mn}\\
 &-\Phi\mathfrak{r}^{ip}\mathfrak{r}^{jq}(F'_{pk}\mathfrak{r}_{qk}-F'_{qk}\mathfrak{r}_{pk})\\
 &-\mathfrak{r}^{ki}\mathfrak{r}^{lj}\left(\bar{\nabla}_k \Phi\bar{\nabla}_l F+\bar{\nabla}_l\Phi\bar{\nabla}_k F+ F\bar{\nabla}_k\bar{\nabla}_l \Phi\right).
\end{align*}
Furthermore, assume that the maximum eigenvalue of $[\mathfrak{r}^{ij}]_{1\leq i,j\leq n-1}$ is $\mathfrak{r}^{11}=\frac{1}{\lambda_1}$. We have
\begin{align}\label{e: upper P}
 \partial_t \mathfrak{r}^{11}=&\Phi F'_{kl}\bar{\nabla}_k\bar{\nabla}_l \mathfrak{r}^{11}-\Phi(F+F'_{kl}\mathfrak{r}_{kl})(\mathfrak{r}^{11})^2
+\Phi(tr F')\mathfrak{r}^{11}\\
&-\Phi(\mathfrak{r}^{11})^2(2F'_{km}\mathfrak{r}^{nl}+F''_{kl;mn})\bar{\nabla}_1\mathfrak{r}_{kl}\bar{\nabla}_1\mathfrak{r}_{mn}\nonumber\\
&-(\mathfrak{r}^{11})^2\left(2\bar{\nabla}_1\Phi\bar{\nabla}_1 F+ F\bar{\nabla}_1\bar{\nabla}_1 \Phi\right).
\nonumber
\end{align}
\begin{lemma}[Upper bounds on the principal curvatures]\label{lem: upper P}
Assume that $n>2$ and $\Phi\equiv 1.$ Let $\{K_t\}_{[0,t_0]}$ be a smooth, strictly convex solution of equation (\ref{e: flow1}) with
\[C_1\leq F\leq C_2\]
for all $t\in[0,t_0].$
Then
\[\mathfrak{r}^{11} \leq \frac{1}{C_1t},\]
for all $t>0.$
\end{lemma}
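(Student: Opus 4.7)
The strategy is to apply the parabolic maximum principle to $\mathfrak{r}^{11}$, interpreted as the largest eigenvalue of $[\mathfrak{r}^{ij}]$. Since $\Phi\equiv 1$, the last line of the evolution equation (\ref{e: upper P}) vanishes. Fix $\tau\in(0,t_0]$ and let $z_\ast\in\mathbb{S}^{n-1}$ be a spatial maximum point of $\mathfrak{r}^{11}(\cdot,\tau)$, with orthonormal coordinates chosen to simultaneously diagonalize $\mathfrak{r}_{ij}$, $\mathfrak{r}^{ij}$, and $F'_{ij}$. At $(z_\ast,\tau)$ the maximum principle gives $F'_{kl}\bar{\nabla}_k\bar{\nabla}_l\mathfrak{r}^{11}\leq 0$. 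The gradient term $-(\mathfrak{r}^{11})^2(2F'_{km}\mathfrak{r}^{nl}+F''_{kl;mn})\bar{\nabla}_1\mathfrak{r}_{kl}\bar{\nabla}_1\mathfrak{r}_{mn}$ is also nonpositive: using $F'_{ij}=\frac{p}{n-1}F\mathfrak{r}^{ij}$ and $F=(\det\mathfrak{r})^{p/(n-1)}$, a direct calculation in the diagonal frame shows that the quadratic form in the bracket is nonnegative. Specifically, the coefficient of $(\bar{\nabla}_1\mathfrak{r}_{pq})^2$ for $p\neq q$ reduces to $\frac{2pF}{(n-1)\lambda_p\lambda_q}$, while on the diagonal entries of $\bar{\nabla}_1\mathfrak{r}$ the combined coefficient matrix has entries $\frac{pF}{(n-1)^2\lambda_p\lambda_q}[(n-1)\delta_{pq}+p]$ and is positive definite.

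For the zero-order terms, property 5 yields $F+F'_{kl}\mathfrak{r}_{kl}=(1+p)F$, while $\operatorname{tr}F'=\frac{p}{n-1}F\mathcal{H}$. Since $\mathfrak{r}^{11}=\max_i\mathfrak{r}^{ii}$, one has $\mathcal{H}\leq(n-1)\mathfrak{r}^{11}$, hence $\operatorname{tr}F'\cdot\mathfrak{r}^{11}\leq pF(\mathfrak{r}^{11})^2$. Assembling the estimates,
\begin{equation*}
\partial_t\mathfrak{r}^{11}\Big|_{(z_\ast,\tau)}\leq -(1+p)F(\mathfrak{r}^{11})^2+pF(\mathfrak{r}^{11})^2=-F(\mathfrak{r}^{11})^2\leq -C_1(\mathfrak{r}^{11})^2.
\end{equation*}
Therefore $u(t):=\max_{\mathbb{S}^{n-1}}\mathfrak{r}^{11}(\cdot,t)$ satisfies $\dot u\leq -C_1 u^2$ in the generalized sense, so $\frac{d}{dt}(1/u)\geq C_1$ integrates (discarding the positive initial contribution $1/u(0)$) to the desired bound $\mathfrak{r}^{11}\leq u(t)\leq 1/(C_1 t)$.

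The principal obstacle is verifying the sign of the gradient term; the specific homogeneous structure of $F=(\det\mathfrak{r})^{p/(n-1)}$ is what makes the combined coefficient matrix explicit and positive semidefinite, and the fact that $\mathfrak{r}^{11}$ is the \emph{largest} eigenvalue is essential both there and in the bound $\mathcal{H}\leq(n-1)\mathfrak{r}^{11}$. A secondary technicality is that ``the largest eigenvalue'' need not be smooth at crossings; one circumvents this in the standard way by applying the maximum principle to $\mathfrak{r}^{ij}\xi_i\xi_j$ for a smooth vector field $\xi$ agreeing with the top eigenvector at $z_\ast$.
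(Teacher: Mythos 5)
Your proof is correct and follows essentially the same route as the paper: the parabolic maximum principle applied to the evolution equation (\ref{e: upper P}) of $\mathfrak{r}^{11}$, the identities $F+F'_{kl}\mathfrak{r}_{kl}=(1+p)F$ and $\operatorname{tr}F'\cdot\mathfrak{r}^{11}\leq pF(\mathfrak{r}^{11})^2$ for the zero-order terms, and an ODE comparison at the end. The only divergence is the gradient term on the second line: the paper invokes the inverse-concavity of $F$ (property 4) and cites Urbas to bound it above by $-2(\mathfrak{r}^{11})^2(\bar{\nabla}_1F)^2/F$, whereas you verify nonpositivity by a direct (and correct) diagonal-frame computation of the quadratic form $2F'_{km}\mathfrak{r}^{nl}+F''_{kl;mn}$ --- a self-contained alternative that suffices here since $\Phi\equiv 1$ makes the third line of (\ref{e: upper P}) vanish.
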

\begin{proof}
We estimate the terms on each line of (\ref{e: upper P}).
\begin{description}
\item[a] \emph{Estimating the terms on the first line:} The first term is an essential good term regarded as an elliptic operator which is non-positive at the point and direction where a maximum eigenvalue occurs. The sum of the second and the third term is less than equal to $-(\mathfrak{r}^{11})^2\Phi F  $.
\item[b]\emph{Estimating the terms on the second line:} Notice that by the fourth property of $G$, we have $F([\mathfrak{r}^{ij}]_{1\leq i,j\leq n-1})=\frac{1}{F([\mathfrak{r}_{ij}]_{1\leq i,j\leq n-1})}$. Therefore, by \cite{JU} page 112, this term can be estimated from above by
  $-2\Phi(\mathfrak{r}^{11})^2 \frac{\left(\bar{\nabla}_1F\right)^2}{F}.$
\item[c]\emph{Estimating the terms on the third line:} By the arithmetic mean-geometric mean inequality we have
\[2|\bar{\nabla}_1\Phi\bar{\nabla}_1 F|\leq 2\Phi\frac{(\bar{\nabla}_1F)^2}{F}+F\frac{(\bar{\nabla}_1\Phi)^2}{2\Phi}.\]
\end{description}
Thus, combining these estimates all together we have
\[\partial_t \mathfrak{r}^{11}\leq -(\mathfrak{r}^{11})^2F\left(\Phi+\bar{\nabla}_1\bar{\nabla}_1 \Phi-\frac{(\bar{\nabla}_1\Phi)^2}{2\Phi}\right).\]
Consequently, by the assumption  $\Phi\equiv 1$, we have
\[\partial_t \mathfrak{r}^{11}\leq-C_1(\mathfrak{r}^{11})^2.\]
 An ODE comparison completes the proof.
\end{proof}
Denote the principal curvatures by $\kappa_i=\frac{1}{\lambda_i}$ for $\displaystyle 1\leq i\leq n-1$.
Now we have all necessary prerequisites to obtain lower and upper bounds on the principal curvatures.
\begin{lemma}[Lower and upper bounds on the principal curvatures]\label{lem: lower and upper P}
Assume that $n>2$ and $\Phi\equiv 1.$ Let $\{K_t\}_{[0,t_0]}$ be a smooth, strictly convex solution of equation (\ref{e: flow1}) with
\[C_1\leq F\leq C_2\]
for all $t\in[0,t_0].$
Then there exists constants $C$ and $C'$ depending on $C_1$ and $C_2$ such that, $\forall t\in[0,t_0]$,
\[C' t^{n-2}\leq\kappa_i\leq \frac{C}{t}.\]
\end{lemma}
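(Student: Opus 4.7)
The plan is to deduce both bounds from Lemma \ref{lem: upper P} together with the pinching built into the assumption $C_1\le F\le C_2$. Under the ordering $\lambda_1\le\lambda_2\le\cdots\le\lambda_{n-1}$, the maximum eigenvalue of $[\mathfrak{r}^{ij}]$ is $\mathfrak{r}^{11}=1/\lambda_1$, so Lemma \ref{lem: upper P} says exactly that
\[
\frac{1}{\lambda_1(z,t)}\le \frac{1}{C_1 t}\qquad\text{for all }(z,t)\in\mathbb{S}^{n-1}\times(0,t_0],
\]
i.e.\ $\lambda_1\ge C_1 t$. Since $\kappa_{n-1}\le\kappa_{n-2}\le\cdots\le\kappa_1=\mathfrak{r}^{11}$, this already yields the claimed upper estimate $\kappa_i\le C/t$ for every $i$ with $C=1/C_1$.

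For the lower bound on the principal curvatures, the point is to upgrade the one-sided eigenvalue control into full pinching by using $F\le C_2$. The lower bound on $\lambda_1$ forces $\lambda_i\ge C_1 t$ for every $i$, while property \textbf{5} of $G$ (or just the definition $F=(\lambda_1\cdots\lambda_{n-1})^{p/(n-1)}$) together with $F\le C_2$ gives
\[
\prod_{i=1}^{n-1}\lambda_i\;\le\; C_2^{(n-1)/p}.
\]
Isolating the largest factor yields
\[
\lambda_{n-1}\;\le\;\frac{C_2^{(n-1)/p}}{\prod_{i=1}^{n-2}\lambda_i}\;\le\;\frac{C_2^{(n-1)/p}}{(C_1 t)^{n-2}},
\]
so $\kappa_i\ge\kappa_{n-1}=1/\lambda_{n-1}\ge C' t^{n-2}$ for a constant $C'=C'(C_1,C_2,n,p)$, which is the stated lower bound.

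There is essentially no obstacle here: the lemma is a bookkeeping consequence of Lemma \ref{lem: upper P} (which already contains the hard PDE work via the evolution equation of $\mathfrak{r}^{ij}$) combined with the elementary fact that a lower bound on the smallest radius of curvature plus an upper bound on their product gives an upper bound on the largest radius. The only thing worth being careful about is that the constant $C_1$ in the pinching hypothesis is the same $C_1$ that appears in Lemma \ref{lem: upper P} (since the proof of that lemma uses $F\ge C_1$ to obtain $\partial_t\mathfrak{r}^{11}\le-C_1(\mathfrak{r}^{11})^2$), so no additional hypotheses are invoked.
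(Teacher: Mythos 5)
Your proof is correct and takes essentially the same route as the paper: the upper bound is just Lemma \ref{lem: upper P}, and the lower bound on $\kappa_{n-1}$ comes from combining $\kappa_i\le 1/(C_1t)$ with the product bound $\prod_i\kappa_i=\mathcal{K}=F^{-\frac{n-1}{p}}\ge C_2^{-\frac{n-1}{p}}$. The paper phrases the second step in terms of $\mathcal{K}=\prod_i\kappa_i$ while you phrase it as $\prod_i\lambda_i\le C_2^{\frac{n-1}{p}}$; these are the same computation.
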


\begin{proof}
In fact, the upper bound on the principal curvatures has been established in Lemma \ref{lem: upper P}. So, we need to verify the claimed lower bound on the principal curvatures. As $\kappa_1 \geq \kappa_2 \geq \ldots \geq \kappa_{n-1}$, from the assumption and Lemma \ref{lem: upper P} we get
$$C_2^{-\frac{n-1}{p}} \leq F^{-\frac{n-1}{p}}=\mathcal{K}= \prod_{i=1}^{n-1} \kappa_i = \kappa_{n-1} \cdot \prod_{i=1}^{n-2} \kappa_i \leq \kappa_{n-1} \left(\frac{1}{C_1t}\right)^{n-2}.$$
\end{proof}

\section{Proof of the main theorem}
First, we remark that $\lim\limits_{t\to T}\max\limits_{\mathbb{S}^{n-1}}s=\infty.$ This result follows from Lemmas \ref{lem: upper G}, \ref{lem: lower G}, and \ref{lem: lower and upper P} combined with a standard argument that has been used in Corollary I1.9 of \cite{BA}.
Second, we point out that the lifespan of a solution to the inverse Gauss curvature flow (\ref{e: flow1}) is $[0,\infty),$ for $0<p< 1$ and $\Phi\equiv 1$. Indeed, if $K_0\subseteq \mathbb{B}_{R}$, for $R$ large enough, then from the comparison principle it follows that $K_t\subseteq\mathbb{B}_{R(t)}$, where $R(t):=\left(\max\limits_{\mathbb{S}^{n-1}}s(\cdot,0)^{1-p}+(1-p)t\right)^{\frac{1}{1-p}}.$ Thus for a finite time $T,$ the family of support functions $\{s(\cdot,t)\}_{t<T}$ is uniformly bounded by $R(T)$ and this violates $\lim\limits_{t\to T}\max\limits_{\mathbb{S}^{n-1}}s=\infty.$
Additionally, we mention that $\lim\limits_{t\to T}\max\limits_{\mathbb{S}^{n-1}}s=\infty$ and Proposition \ref{prop: chow} together imply that $\lim\limits_{t\to\infty}\min\limits_{\mathbb{S}^{n-1}}s=\infty.$
Therefore, we can use Corollaries \ref{cor: 1} and \ref{cor: 2} in the subsequent argument.

Next, observe that a solution of equation (\ref{e: flow1}) has the following rescaling property.
Let $s:\mathbb{S}^{n-1}\times[0,\infty)\to\mathbb{R}^{n}$ be a solution of equation (\ref{e: flow1}). Then for each $a>0$, $s_{a}$ defined as $s_{a}:\mathbb{S}^{n-1}\times\left[0,\infty\right)\to\mathbb{R}^{n}$ and
\[s_{a}(\theta,t)=a^{\frac{1}{n}} s\left(\theta,a^{\frac{p-1}{n}}t\right)\]
is also a solution of evolution equation (\ref{e: flow1}).

For each \emph{fixed} time $t\in[0,\infty),$ define $\bar{s}$ a solution of (\ref{e: flow1}), by the rescaling property, as follows
\[\bar{s}(z,\tau)=\left(\frac{V(\mathbb{B})}{V(K_t)}\right)^{\frac{1}{n}}s\left(z,
t+\left(\frac{V(\mathbb{B})}{V(K_t)}\right)^{\frac{p-1}{n}}\tau\right).\]
So, $\bar{s}(\cdot,0)$ is the support function of $\left(\frac{V(\mathbb{B})}{V(K_t)}\right)^{\frac{1}{n}}K_t$. Therefore,
\[c\leq \bar{s}(z,0)\leq C,\]
for constants $c$ and $C$ obtained from Corollary \ref{cor: 2}.
Let $\mathbb{B}_C$ denote the ball of radius $C$ centered at the origin. Thus, $\mathbb{B}_{C}$ encloses
the convex body associated with the support function $\bar{s}(\cdot,0).$ The containment principal insures that $\mathbb{B}_{2C}$
will encompass the convex body associated with the support function $\bar{s}(\cdot,\tau),$ for $\tau\in[0,\delta],$
where $\delta:=\frac{1}{1-p}(2^{1-p}-1)C^{1-p}$ is the elapsed time that it take for $\mathbb{B}_{C}$ to expand to $\mathbb{B}_{2C}$ under the flow. Now, Lemmas \ref{lem: upper G}, \ref{lem: lower G}, and \ref{lem: lower and upper P} imply that there are uniform lower and upper bounds on the principal curvatures and on the speed of the flow on the time interval $[\delta/2,\delta].$ Hence, by \cite{K}, we conclude that there are uniform bounds on higher derivatives of the curvature. Accordingly, all quantities related to the original solution that are scaling invariant fulfil uniform bounds on the time interval
$\left[\left(\frac{V(\mathbb{B})}{V(K_0)}\right)^{\frac{p-1}{n}}\frac{\delta}{2},\infty\right):$ Observe that for a fixed time $t$
\[\bar{s}(\cdot,\tau)=\left(\frac{V(\mathbb{B})}{V(K_t)}\right)^{\frac{1}{n}}s\left(\cdot,
t+\left(\frac{V(\mathbb{B})}{V(K_t)}\right)^{\frac{p-1}{n}}\tau\right)\]
has uniform $C^k$ bounds for all $\tau\in[\delta/2,\delta]$ and these bounds are independent of $t$ and $\tau$. Furthermore,
the volume of the convex body corresponding to $\bar{s}(\cdot,\tau)$ is bounded from above by $V(\mathbb{B}_{2C}).$
Thus,
\[\displaystyle\frac{V(\mathbb{B})}{V(\mathbb{B}_{2C})}\leq E_t:=\frac{V(K_t)}{V\left(K_{t+\left(\frac{V(\mathbb{B})}{V(K_t)}\right)^{\frac{p-1}{n}}\tau}\right)}\leq 1\]
for all $t\in [0,\infty)$ and $\tau\in[\delta/2,\delta]$. This implies that we have uniform $C^k$ bounds for
\begin{align*}
\left(\frac{V(\mathbb{B})}{V\left(K_{t+\left(\frac{V(\mathbb{B})}{V(K_t)}\right)^{\frac{p-1}{n}}\tau}\right)}\right)^{\frac{1}{n}}s\left(\cdot,
t+\left(\frac{V(\mathbb{B})}{V(K_t)}\right)^{\frac{p-1}{n}}\tau\right)
=E_t\cdot\bar{s}(\cdot,\tau)
\end{align*}
for all $t\in [0,\infty)$ and $\tau\in[\delta/2,\delta]$, in particular for $\tau=\delta/2.$ Next, we show that for every
 $\tilde{t}\in\left[\left(\frac{V(\mathbb{B})}{V(K_0)}\right)^{\frac{p-1}{n}}\frac{\delta}{2},\infty\right)$, we can find $t\in[0,\infty)$ such that
\[\tilde{t}=t+\left(\frac{V(\mathbb{B})}{V(K_t)}\right)^{\frac{p-1}{n}}\frac{\delta}{2}.\]
Define $f(t)=t+\left(\frac{V(\mathbb{B})}{V(K_t)}\right)^{\frac{p-1}{n}}\frac{\delta}{2}-\tilde{t}$ on $[0,\infty)$.
This is a continuous function. We have $f(\infty)=\infty-\tilde{t}>0$. On the other hand, we have
\[f(0)=\left(\frac{V(\mathbb{B})}{V(K_0)}\right)^{\frac{p-1}{n}}\frac{\delta}{2}-\tilde{t}\leq 0.\]
The claim follows. So, we have proved that:\\
For every $\tilde{t}\in\left[\left(\frac{V(\mathbb{B})}{V(K_0)}\right)^{\frac{p-1}{n}}\frac{\delta}{2},\infty\right)$,
there is a $t\in [0,\infty)$ with $\tilde{t}=t+\left(\frac{V(\mathbb{B})}{V(K_t)}\right)^{\frac{p-1}{n}}\frac{\delta}{2}$, such that the family of convex bodies
\[\left\{\tilde{K}_{\tilde{t}}=\left(\frac{V(\mathbb{B})}{V(K_{\tilde{t}})}\right)^{\frac{1}{n}}K_{\tilde{t}}\right\}_{\tilde{t}\in
\left[\left(\frac{V(\mathbb{B})}{V(K_0)}\right)^{\frac{p-1}{n}}\frac{\delta}{2},\infty\right)}\]
has uniform $C^k$ bounds.\\
To finish the proof of the theorem, recall from Corollary \ref{cor: 1} that \[\lim\limits_{t\to\infty}\frac{\max\limits_{\mathbb{S}^{n-1}}s_{\tilde{K}_t}}{\min\limits_{\mathbb{S}^{n-1}}s_{\tilde{K}_t}}=\lim\limits_{t\to\infty}\frac{\max\limits_{\mathbb{S}^{n-1}} s(\cdot,t)}{\min\limits_{\mathbb{S}^{n-1}} s(\cdot,t)}=1.\]
As $\min\limits_{\mathbb{S}^{n-1}}s_{\tilde{K}_t}\leq1\leq\max\limits_{\mathbb{S}^{n-1}}s_{\tilde{K}_t}$, we infer that $\lim\limits_{t\to\infty} s_{\tilde{K}_t}=1.$ Therefore, $\left\{\left(\frac{V(\mathbb{B})}{V(K_t)}\right)^{\frac{1}{n}}K_t\right\}_t$ converges in the $C^{\infty}$ topology to the origin-centered unit ball, as $t$ approaches $\infty$.

\begin{remark}
An alternative approach to prove Theorem \ref{thm: expanding} is to combine Harnack inequality of Y. Li \cite{LI} with displacement bounds, almost in an identical manner as in \cite{BA}.
\end{remark}
\section{Further discussion: Shrinking Gauss curvature flow}
In the preceding sections, using the evolution equation of dual convex bodies we studied long time behavior of the expanding Gauss curvature flows in the case $\Phi\equiv 1.$ Here, we would like to point out that this method can also be used to obtain $C^2$ regularity of solutions under certain powers of Gauss curvature flow to prove the known Theorem \ref{thm:2}, which was proved by Andrews in \cite{BA4}, including the case $p=1$. The contribution of this section is mainly on the uniform lower bound on the Gauss curvature for the normalized solution.

For a given smooth, strictly convex embedding $x_K$, consider a family of smooth convex bodies $\{K_t\}_t$ given by the smooth embeddings $x:\partial K\times[0,T)\to \mathbb{R}^{n}$, which
are evolving according to the anisotropic shrinking Gauss curvature flow, namely,
\begin{equation}\label{e: flow2}
 \partial_{t}x(\cdot,t):=\Phi(\mathbf{n}_{K_t}(\cdot))\mathcal{K}^{\frac{p}{n-1}}\, \mathbf{n}_{K_t}(\cdot),~~
 x(\cdot,0)=x_{K}(\cdot)
\end{equation}
where $\Phi:\mathbb{S}^{n-1}\to\mathbb{R}$ is a positive smooth function, $0<T<\infty$, and $p\in \left(0,\infty\right).$
In this equation $x(\partial K,t)=\partial K_t.$
\begin{theorem}\label{thm:2}\cite{BA4}
Assume that $p\in\left(\frac{n-1}{n+1},1\right).$ Let $x_{K}$ be a smooth, strictly convex embedding of $\partial K$. Then there exists a smooth, unique solution of (\ref{e: flow2}). Furthermore, there is a point $q\in\mathbb{R}^{n}$ such that the rescaled convex bodies given by $\left(\frac{V(\mathbb{B})}{V(K_t)}\right)^{\frac{1}{n}}\left(K_t-q\right)$ converge in the $C^{\infty}$ topology to a homothetic solution of (\ref{e: flow2}). Additionally, if $p\in\left(0,\frac{n-1}{n+1}\right)$ and the isoperimetric ratio remains bounded, then the same conclusion holds as in the case of $p\in\left(\frac{n-1}{n+1},1\right).$
\end{theorem}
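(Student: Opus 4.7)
The plan is to adapt the dual convex bodies machinery of Sections 2--5 to the shrinking flow (\ref{e: flow2}), emphasizing only the step the author singles out as new (the uniform lower bound for $\mathcal{K}$ on the normalized solution), since short-time existence, upper Gauss curvature bounds, and the concluding compactness/soliton arguments are either standard or already carried out in \cite{BA4}. Short-time existence is immediate from the strict parabolicity of the equation for the support function, and a ball comparison shows extinction at some $T < \infty$ at a single point $q \in \mathbb{R}^n$. I would work throughout with the volume-normalized, recentered family $\tilde{K}_t = (V(\mathbb{B})/V(K_t))^{1/n}(K_t - q_t)$, with $q_t$ an intrinsic center (e.g.\ Steiner point) of $K_t$ arranged so that the origin stays well inside $\tilde{K}_t$.

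Under either hypothesis of the theorem the isoperimetric ratio of $\tilde{K}_t$ is uniformly bounded: it is assumed when $p\in(0,(n-1)/(n+1))$, and for $p\in((n-1)/(n+1),1)$ it follows from Andrews' entropy estimate in \cite{BA4}. Combined with the centering of the origin, this yields two-sided bounds $1/C\le s_{\tilde K_t}\le C$ in exact analogy with Corollary \ref{cor: 2}. The upper bound on $\mathcal{K}$ for $\tilde K_t$ then follows from a direct application of Tso's trick to the normalized shrinking equation, mirroring Lemma \ref{lem: upper G} up to signs.

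The key new step is the lower bound on $\mathcal{K}$. Following the proof of the dual evolution equation in Section 2, I would parametrize $\partial K_t$ by the radial function, use the shrinking analogue of Lemma \ref{lem: ev r}, and then use $\mathfrak{r}^\circ_{ij} = (\sqrt{r^2+|\bar\nabla r|^2}/r^3)\,h_{ij}$ to rewrite $\partial_t s^\circ$ purely in terms of dual quantities. Because shrinking of $K_t$ corresponds to expansion of $K_t^\circ$, the resulting equation for $s^\circ$ is an expanding flow of inverse-generalized-Gauss-curvature type, of the same structural form as (\ref{e: ev of dual}) up to the sign and the exponent. I would then apply Tso's trick in its expanding form (as in Lemma \ref{lem: upper G}) to this dual equation. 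The uniform two-sided bounds on $s^\circ_{\tilde K_t}$, inherited from those on $s_{\tilde K_t}$, control all the nonlinear coefficients involving $s^\circ$, $|\bar\nabla s^\circ|$, and the power $(n+1)p/(n-1)-1$, so the trick produces a bound $\mathcal{K}^\circ \le C$. By Kaltenbach's identity $(\mathcal{K}/s^{n+1})(x)\,(\mathcal{K}^\circ/s^{\circ\,n+1})(x^\circ)=1$ this converts into a uniform lower bound on $\mathcal{K}$ for the normalized solution.

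Once $\mathcal{K}$ is pinched two-sidedly, the evolution equation of $\mathfrak{r}^{ij}$ in its shrinking analogue of (\ref{e: upper P}) together with the maximum principle argument of Lemma \ref{lem: upper P} bounds the principal curvatures from above, and combining with the lower bound on $\mathcal{K}$ pins the smallest principal curvature away from zero. Krylov--Safonov and Schauder estimates then yield uniform $C^{k,\alpha}$ bounds for every $k$, and Blaschke--Hamilton compactness produces a smooth convex subsequential limit which, by Andrews' entropy, must be a homothetic shrinker. I expect the main obstacle to be the careful bookkeeping in the third paragraph: verifying that the dual equation for (\ref{e: flow2}) has genuinely the right structural form for the expanding Tso argument to apply uniformly, so that the constants in the upper bound on $\mathcal{K}^\circ$ depend only on the support-function bounds and on $n, p, \Phi$, and not on any quantity that degenerates as $t\to T$.
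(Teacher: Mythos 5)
Your proposal follows essentially the same route as the paper's Section 6: two-sided support function bounds for the recentered, volume-normalized bodies (via the bounded width ratio from \cite{BA4}, respectively the assumed isoperimetric bound), then the dual evolution equation turning the shrinking flow into an expanding flow of the dual bodies, Tso's trick on that dual equation to bound $\mathcal{K}^\circ$ from above, and Kaltenbach's identity to convert this into the uniform lower bound on $\mathcal{K}$, after which one concludes as in \cite{BA4}. The structural worry you flag in your last paragraph is resolved exactly as in the paper's Lemma \ref{lem: lower G}, so no new obstacle arises.
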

Notice that for $\Phi\equiv1$ and $p=\frac{n-1}{n+1}$ the flow (\ref{e: flow2}) is the well-known affine normal flow which has been studied in
\cite{BA4,AST,Ivaki1,IS,ST}.

On the contrary to the statement of Theorem \ref{thm: expanding}, we do not need to confine ourselves to the case $\Phi\equiv1.$ The reason is that Theorem 10 of \cite{BA4} is available for any positive smooth function $\Phi.$
\begin{remark}\label{rem: centroid}
For a convex body $K$, denote the maximum width by $\omega_+(K):=\max\limits_{\mathbb{S}^{n-1}}(s(z)+s(-z))$ and the minimum width by $\omega_{-}(K):=\min\limits_{\mathbb{S}^{n-1}}(s(z)+s(-z))$. A convex body with centroid $b$, satisfies $\frac{\omega_{-}}{n+1}\mathbb{B}\subseteq K-b\subseteq \frac{n\omega_{+}}{n+1}\mathbb{B}$; this result goes back to Minkowski (1897), see Schneider \cite{schneider}, page 308. This in particular implies that $\frac{n+1}{n\omega_{+}}\mathbb{B}\subseteq (K-b)^{\circ}\subseteq \frac{n+1}{\omega_{-}}\mathbb{B}.$
\end{remark}
In connection to the flow (\ref{e: flow2}), after obtaining the evolution equation of the corresponding dual convex bodies, one can acquire a uniform lower bound on the Gauss curvature in terms of inradius, circumradius, and time; likewise done in Lemma 2.3 in \cite{IS}: In \cite{BA4}, it was proved that the ratio $\omega_{+}(K_t)/\omega_{-}(K_t)$ remains bounded along the flow (\ref{e: flow2}), if $p\in\left(\frac{n-1}{n+1},1\right)$. Therefore, the rescaled solution given by $\left\{\tilde{K}_t=\left(\frac{V(\mathbb{B})}{V(K_t)}\right)^{\frac{1}{n}}K_t\right\}_{t\in[0,T)}$ satisfy $\omega_+(\tilde{K}_t)\leq \tilde{C}<\infty$ and $\omega_-(\tilde{K}_t)\geq \tilde{c}>0$. On the other hand, if $s(\cdot,t)$ is the solution to (\ref{e: flow2}) then for each \emph{fixed} time $t$,
\[\bar{s}(z,\tau)=\left(\frac{V(\mathbb{B})}{V(K_t)}\right)^{\frac{1}{n}}\left(s\left(z,
t+\left(\frac{V(\mathbb{B})}{V(K_t)}\right)^{-\frac{p}{n}}\tau\right)-\langle b_t,z\rangle\right)\]
is also a solution to (\ref{e: flow2}) with the initial data $\bar{s}(z,0)=\left(\frac{V(\mathbb{B})}{V(K_t)}\right)^{\frac{1}{n}}\left(s(z,t)-\langle b_t,z\rangle\right),$ here $b_t$ is the centroid of $K_t.$ So, from Remark \ref{rem: centroid} we get $\frac{\tilde{c}}{n+1}\leq \bar{s}(z,0)\leq \frac{n\tilde{C}}{n+1}$ and $\frac{n+1}{n\tilde{C}}\leq \bar{s}^{\circ}(z,0)\leq \frac{n+1}{\tilde{c}}.$ Now we can use the argument of \cite{IS} (similar to the argument in the proof of Lemma \ref{lem: lower G}) to obtain a uniform lower bound on the Gauss curvature (which is invariant under Euclidian translations) for the normalized solution, $\tilde{K}_t$, in terms of $\tilde{c},\tilde{C},n$, and $p$. We can then continue as in \cite{BA4} to prove Theorem \ref{thm:2}. The advantage is that we avoid using the Haranck inequality and the displacement bounds of \cite{BA4}. However, we mention that Haranck inequalities and displacement bounds are fundamental ingredients to prove existence of solutions to some flows with non-smooth initial data, see \cite{BA4,BYM}. Moreover, they can also be used to obtain stability of some geometric inequalities that come from geometric flows \cite{Ivaki}. Therefore, by no means our method here substitutes those in \cite{BA,BA4}, but abridges the regularity arguments. For some powers $p>1$ and $\Phi\equiv1$, the flow (\ref{e: flow2}) has been treated in \cite{BA2,BX,Ch,PL}.\\\\
\textbf{Acknowledgment.}\\
 I would like to thank the referee for helpful comments and the thorough attention given to the manuscript.
\bibliographystyle{amsplain}

\begin{thebibliography}{30}
\bibitem{BA}  B. Andrews, \textit{Evolving convex curves}, Calc. Var. Partial Differential Equations \textbf{7} (1998), no. 4, 315--371.
\bibitem{BA2} B. Andrews, \textit{Gauss curvature flow: the fate of the rolling stones},  Invent. math. \textbf{138} (1999), no. 1, 151--161.
\bibitem{BA4} B. Andrews, \textit{Motion of hypersurfaces by Gauss curvature}, Pacific J. Math. \textbf{195} (2000), no. 1.
\bibitem{BA6} B. Andrews, \textit{Noncollapsing in mean-convex mean curvature flow}, Geom. Topol. \textbf{16} (2012), no. 3, 1413--1418.
\bibitem{BP} B. Andrews and P. Bryan, \textit{A comparison theorem for the isoperimetric profile under curve-shortening flow}, Comm. Anal. Geom. \textbf{19} (2011), no. 3, 503--539.
\bibitem{BX} B. Andrews and X. Chen, \textit{Surfaces moving by powers of Gauss curvature}, preprint 2011, arxiv.org/abs/1111.4616.
\bibitem{BYM} B. Andrews, J. McCoy and Y. Zheng, \textit{Contracting convex hypersurfaces by curvature}, Calc. Var. Partial Differential Equations DOI 10.1007/s00526-012-0530-3
\bibitem{AST} S. Angenent, G. Sapiro, and \textit{A.~Tannenbaum}, \textit{On the heat equation for non-convex curves}, J. Amer. Math. Soc. \textbf{11} (1998), no. 3, 601--634.
\bibitem{Ch} B. Chow, \textit{Deforming convex hypersurfaces by the nth root of the Gaussian curvature}, J. Differential Geom. \textbf{22} (1985), 117--138.
\bibitem{BG} B. Chow and R. Gulliver, \textit{Aleksandrov reflection and nonlinear evolution equations I. The n-sphere and n-ball}, Calc. Var. Partial Differential Equations \textbf{4} (1996), no. 3, 249--264.
\bibitem{BT} B. Chow and D. H. Tsai, \textit{Expansion of convex hypersurfaces by nonhomogeneous functions of curvature}, Asian J. Math. \textbf{1} (1997), no. 4, 769--784.
\bibitem{BT2} B. Chow and D. H. Tsai, \textit{Nonhomogeneous Gauss curvature flows}, Indiana Univ. Math. J. \textbf{47} (1998), no. 3, 965--994.
\bibitem{G1} C. Gerhardt, \textit{Non-scale-invariant inverse curvature flows in Euclidean space}, Calc. Var. Partial Differential Equations DOI:10.1007/s00526-012-0589-x, 2012.
\bibitem{G} C. Gerhardt, \textit{Flow of nonconvex hypersurfaces into spheres}, J. Differential Geom. \textbf{32} (1990), no. 1, 299--314.
\bibitem{PL} P. Guan, L. Ni, \textit{Entropy and a convergence theorem for Gauss curvature flow in high dimension}, arXiv:1306.0625v1.
\bibitem{H} D. Hug, \textit{Curvature relations and affine surface area for a general convex body and its polar}, Results in Mathematics \textbf{29} (1996), no. 3, 233--248.
\bibitem{Hu} G. Huisken, \textit{On the expansion of convex hypersurfaces by the inverse of symmetric curvature functions}, (unpublished).
\bibitem{HI} G. Huisken and T. Ilmanen, \textit{The inverse mean curvature flow and the Riemannian Penrose inequality}, J. Differential Geom. \textbf{59} (2001), no. 3, 353--437.
\bibitem{Ivaki1} M. N. Ivaki, \textit{Centro-affine curvature flows on centrally symmetric convex curves}, Trans. Amer. Math. Soc.,  to appear, available at
http://arxiv.org/abs/1205.6456.
\bibitem{Ivaki} M. N. Ivaki, \textit{On the stability of the p-affine isoperimetric inequality}, J. Geom. Anal. DOI: 10.1007/s12220-013-9401-1, 2013.
\bibitem{IS} M. N. Ivaki and A.~Stancu, \textit{Volume preserving centro-affine normal flows}, Comm. Anal. Geom., \textbf{21} (2013), no.3, 671--685.
\bibitem{Kal} F. J. Kaltenbach, \textit{Asymptotische Verhalten zuf\"{a}lliger konvexer Ployeder}, Dissertation, Feiburg, 1990.
\bibitem{K} N. V. Krylov, \textit{Nonlinear Elliptic and Parabolic Equations of the Second Order}, D. Reidel Publishing Co., Dordrecht, 1987.
\bibitem{QLI} Q. R. Li, \textit{Surfaces expanding by the power of the Gauss curvature flow}, Proc. Amer. Math. Soc. \textbf{138} (2010), no. 11, 4089--4102.
\bibitem{LI} Y. Li, \textit{Harnack inequality for the negative power Gaussian curvature flow}, Proc. Amer. Math. Soc. \textbf{139} (2011), no. 10, 3707--3717.
\bibitem{LR} M. Ludwig and M. Reitzner, \textit{A classification of $SL(n)$ invariant valuations}, Annals of Math. \textbf{172} (2010), no. 2, 1223--1271.
\bibitem{ST} G. Sapiro and A. Tannenbaum, \textit{On affine plane curve evolution}, J. Funct. Anal. \textbf{119} (1994), no. 1, 79--120.
\bibitem{schneider} R. Schneider, \textit{Convex bodies: The Brunn-Minkowski theory, Encyclopedia of Mathematics and its Applications}, Cambridge Univ. Press, Cambridge, 1993.
\bibitem{Shu} O. C. Schn\"{u}rer, \textit{Surfaces expanding by the inverse Gauss curvature flow}, J. Reine Angew. Math. \textbf{600} (2006), 117--134.
\bibitem{S} A. Stancu, \textit{Centro-affine invariants for smooth convex bodies}, Int. Math. Res. Not. IMRN, DOI: 10.1093/imrn/rnr110, 2011.
\bibitem{Tso} K. Tso, \textit{Deforming a hypersurface by its Gauss-Kronecker curvature}, Comm. Pure Appl. Math. \textbf{38} (1985), no. 6, 867--882.
\bibitem{JU1} J. I. E. Urbas, \textit{On the expansion of starshaped hypersurfaces by symmetric functions of their principal curvatures}, Math. Z. \textbf{205} (1990), no. 3, 355--372.
\bibitem{JU} J. I. E. Urbas, \textit{An expansion of hypersurfaces}, J. Differential Geom. \textbf{91} (1991), no. 1, 91--125, Correction in: J. Differential
Geom. \textbf{35} (1992), 763--765.
\bibitem{Zhu} X. P. Zhu, \textit{Lectures on mean curvature flows}, \textbf{32} Amer. Math. Soc., International Press, 2002.
\end{thebibliography}

\end{document}